\newtheorem{theorem}{Theorem}[section]
\newaliascnt{lemma}{theorem}
\newtheorem{lemma}[lemma]{Lemma}
\newaliascnt{conjecture}{theorem}
\newtheorem{conjecture}[conjecture]{Conjecture}
\newaliascnt{proposition}{theorem}
\newtheorem{proposition}[proposition]{Proposition}
\newaliascnt{corollary}{theorem}
\newtheorem{corollary}[corollary]{Corollary}
\newaliascnt{problem}{theorem}
\newaliascnt{question}{theorem}
\newaliascnt{claim}{theorem}
\newtheorem{claim}[claim]{Claim}
\theoremstyle{definition}
\newaliascnt{definition}{theorem}
\newtheorem{definition}[definition]{Definition}
\newaliascnt{example}{theorem}
\theoremstyle{remark}
\newaliascnt{remark}{theorem}
\newtheorem{remark}[remark]{Remark}
\newaliascnt{remarks}{theorem}
\numberwithin{equation}{section}
\numberwithin{figure}{section}
\def\wt{\widetilde}
\def\ol{\overline}
\def\lra{\longrightarrow}
\def\div{\text{\rm{div\,}}}
\def\bbp{\mathbb P}
\def\cala{\mathcal A}
\def\cals{\mathcal S}
\def\calt{\mathcal T}
\def\Pic{\text{{\rm Pic\,}}}
\def\rank{\text{{\rm rank\,}}}
\def\bbp{\mathbb{P}}
\newcommand{\Cores}{\mathrm{Cor}}
\newcommand{\Hbb}{\mathbb{H}}
\newcommand{\Qbb}{\mathbb{Q}}
\newcommand{\Rbb}{\mathbb{R}}
\newcommand{\Zbb}{\mathbb{Z}}
\newcommand{\Mat}{\mathrm{Mat}}
\newcommand{\Mcal}{\mathcal{M}}
\newcommand{\Acal}{\mathcal{A}}
\newcommand{\Cbb}{\mathbb{C}}
\newcommand{\ra}{\rightarrow}
\newcommand{\mono}{\hookrightarrow}
\newcommand{\Tcal}{\mathcal{T}}
\newcommand{\GSp}{\mathrm{GSp}}
\newcommand{\Sp}{\mathrm{Sp}}
\newcommand{\bsh}{\backslash}
\newcommand{\isom}{\simeq}
\newcommand{\Gbb}{\mathbb{G}}
\newcommand{\mrm}{\mathrm{m}}
\newcommand{\Gbf}{\mathbf{G}}
\newcommand{\Hbf}{\mathbf{H}}
\newcommand{\Sbb}{\mathbb{S}}
\newcommand{\Res}{\mathrm{Res}}
\newcommand{\GL}{\mathbf{GL}}
\newcommand{\SL}{\mathbf{SL}}
\newcommand{\der}{\mathrm{der}}
\newcommand{\Vbb}{\mathbb{V}}
\newcommand{\Ocal}{\mathcal{O}}
\newcommand{\SU}{\mathbf{SU}}
\newcommand{\tr}{\mathrm{tr}}
\newcommand{\Ubf}{\mathbf{U}}
\newcommand{\Jbf}{\mathbf{J}}
\newcommand{\Ecal}{\mathcal{E}}
\newcommand{\Cbar}{{\overline{C}}}
\newcommand{\Vcal}{{\mathcal{V}}}
\newcommand{\Hom}{\mathrm{Hom}}
\newcommand{\Qac}{\overline{\mathbb{Q}}}
\newcommand{\Nm}{\mathrm{Nm}}
\newcommand{\fbar}{{\overline{f}}}
\newcommand{\Sbar}{{\overline{S}}}
\newcommand{\Hcal}{\mathcal{H}}
\newcommand{\Fbar}{{\bar{F}}}
\newcommand{\PGL}{\mathbf{PGL}}
\newcommand{\Lbar}{{\bar{L}}}
\newcommand{\Gal}{{\mathrm{Gal}}}
\newcommand{\Emb}{{\mathrm{Emb}}}
\newcommand{\Ubb}{{\mathbb{U}}}
\newcommand{\Kbar}{{\bar{K}}}
\begin{document}
	\title{The Oort conjecture for Shimura curves of small unitary rank}

	\author{Ke Chen}
	\address{Department of Mathematics, Nanjing University, Hankou Road 22, Nanjing, 210093, P. R. China}
	\email{kechen@nju.edu.cn}

	\author{Xin Lu}
	\address{Institut f\"ur Mathematik, Universit\"at Mainz,
		Mainz, 55099, Germany}
	\email{x.lu@uni-mainz.de}

	%
	\author{Kang Zuo}
	\address{Institut f\"ur Mathematik, Universit\"at Mainz,
	Mainz, 55099, Germany}
	\email{zuok@uni-mainz.de}
	\thanks{This work is supported by SFB/Transregio 45 Periods,
	 Moduli Spaces and Arithmetic of Algebraic Varieties of DFG,
	 by NSF of China Grant 11771203, Fundamental Research Funds for the Central Universities,  Nanjing University, no. 0203-14380009, and by the Science Foundation of Shanghai
	 (No. 13DZ2260400).}
	
	
	
	
	
	%
	
	\maketitle
\begin{abstract}
	We prove that a Shimura curve in the Siegel modular variety is not generically contained in the open Torelli locus as long as the rank of unitary part in its canonical Higgs bundle satisfies a numerical upper bound. As an application we show that the Coleman-Oort conjecture holds for Shimura curves associated to partial corestriction upon a suitable choice of parameters, which generalizes a construction due to Mumford.
\end{abstract}


\section{Introduction} In this paper we study the Oort conjecture (also referred to as the Coleman-Oort conjecture) for some Shimura curves. Recall that: 

\begin{conjecture}[Oort]\label{conjecture oort} Let $\Tcal_g^\circ$ be the open Torelli locus in the Siegel modular variety $\Acal_g$. Then for $g$ sufficiently large, the intersection of $\Tcal_g^\circ$ with any Shimura subvariety $M\subsetneq\Acal_g$ of strictly positive dimension is NOT Zariski open in $M$.
	
\end{conjecture}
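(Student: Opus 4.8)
The plan is to prove the conjecture by contradiction through the Higgs-bundle and Arakelov-inequality machinery attached to the weight-one variation of Hodge structure carried by a Shimura subvariety. Suppose that for arbitrarily large $g$ there is a Shimura subvariety $M\subsetneq\Acal_g$ of positive dimension with $\Tcal_g^\circ\cap M$ Zariski open in $M$. Intersecting $M$ with a sufficiently general Shimura curve reduces the problem to ruling out a Shimura curve $C$ that is generically contained in $\Tcal_g^\circ$. After a finite \'etale base change we obtain a smooth projective model $\Cbar\supset C$ with reduced boundary $\Delta$, together with a semistable family $f\colon X\to\Cbar$ of genus-$g$ curves whose relative Jacobian realizes the polarized $\Zbb$-VHS underlying $C\hookrightarrow\Acal_g$. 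By Deligne semisimplicity and Simpson's correspondence the logarithmic Higgs bundle $(E=E^{1,0}\oplus E^{0,1},\theta)$ of this family splits orthogonally as the sum of a maximal-Higgs subsystem $(\caln,\theta|_{\caln})$, on which the Kodaira--Spencer map $\theta\colon \caln^{1,0}\to\caln^{0,1}\otimes\Omega^1_{\Cbar}(\log\Delta)$ is an isomorphism, and a unitary flat subsystem $(\Ucal,0)$; the Shimura condition forces $\caln$ to saturate the Arakelov inequality, so that $\deg\caln^{1,0}=\tfrac12\rank(\caln^{1,0})\cdot\deg\Omega^1_{\Cbar}(\log\Delta)$.

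The first quantitative input is that $r:=\rank\caln^{1,0}$ is bounded by data that do not grow with $g$: since $C$ is a Shimura curve, $\caln$ is, up to isogeny, the ``new'' part cut out by a quaternionic or unitary factor of the ambient group, and its rank is controlled by the signatures at the archimedean places and the degree of the totally real base field --- in particular it is uniformly bounded. The remaining rank $g-r$ is then entirely absorbed by the unitary part $\Ucal$. The second input is the Torelli obstruction: generic containment $C\subset\Tcal_g^\circ$ means that at a general fibre $X_t$ the Higgs field factors through the codifferential of the Torelli map, i.e. through the dual of the multiplication map $\mathrm{Sym}^2 H^0(X_t,\omega_{X_t})\to H^0(X_t,\omega_{X_t}^{\otimes 2})$. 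Here one separates the hyperelliptic fibres, where $\omega_{X_t}$ is pulled back from $\bbp^1$ and the Higgs field on the $(-1)$-eigenspace of the hyperelliptic involution can be analysed by hand, from the non-hyperelliptic fibres, where Noether's theorem gives surjectivity of the multiplication map and a bound on its kernel; in either case the sub-Higgs sheaf generated by the image of $\theta$ --- which necessarily contains $\caln$ --- cannot carry large rank without violating a Clifford/Castelnuovo-type estimate for the fibre curves, and this should yield a second, Torelli-driven inequality forcing $g-r$, hence $g$, to be bounded.

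Assembling the Arakelov equality, the uniform bound on $r$, and the Torelli inequality would then give $g\le\Phi(\text{bounded data})$ for an explicit function $\Phi$ independent of $g$, contradicting the hypothesis and proving the conjecture. I expect the decisive difficulty --- indeed the reason the conjecture remains open --- to be the last step of the previous paragraph: the Torelli/Clifford obstruction constrains the maximal sub-Higgs sheaf $\caln$ rather directly, but a flat unitary summand $\Ucal$ of large rank contributes nothing to the Arakelov defect and, a priori, nothing to the second fundamental form of the Torelli embedding, so there is no evident mechanism bounding $\rank\Ucal$ from the mere inclusion $C\subset\Tcal_g^\circ$. A realistic intermediate target --- and the one realized in this paper --- is to replace the unconditional Torelli inequality by the hypothesis that $\rank\Ucal$ satisfies an explicit numerical upper bound, under which the counting of the present paragraph still closes and one deduces the non-containment; one then verifies that numerical hypothesis for an infinite family of Shimura curves produced by partial corestriction of quaternion algebras, generalizing Mumford's example, thereby confirming the conjecture in those cases.
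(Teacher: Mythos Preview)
The statement you are asked to prove is a \emph{conjecture}; the paper does not prove it, and indeed it remains open. Your write-up is not a proof but a strategy sketch, and you yourself acknowledge in the final paragraph that the crucial step fails: there is no known mechanism that bounds the rank of the unitary summand $\Ucal$ in terms of the inclusion $C\subset\Tcal_g^\circ$ alone. That honesty is appropriate, but the proposal should then be presented as motivation rather than as a proof attempt.

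There is also a substantive error in your ``first quantitative input''. You assert that $r=\rank\caln^{1,0}$ is uniformly bounded independently of $g$ because it is controlled by the signatures and the degree of the totally real field. This is false in general: for the very Shimura curves treated in this paper (partial corestriction of index $t$ over a totally real field of degree $d$), one has $r=g\cdot t/d$, which grows linearly with $g$ once the ratio $t/d$ is fixed. The symplectic representation defining $C\hookrightarrow\Acal_g$ can have arbitrarily large dimension, and the non-compact factor acts non-trivially on a subspace whose dimension scales with it. So the dichotomy you set up --- small $r$, hence large unitary part, hence contradiction via Torelli/Clifford bounds --- does not cover all cases and in fact misses precisely the regime this paper addresses.

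What the paper actually does is complementary to the picture you describe. Previous work (e.g.\ \cite{chen lu zuo compositio}) handled Shimura curves with \emph{large} unitary part by combining Xiao-type slope inequalities with explicit lower bounds on $\rank\Ucal$ coming from the representation theory. The present paper treats the opposite extreme: it assumes the unitary part is \emph{small}, namely $\rank F^{1,0}_{\Cbar}\le(2g-22)/7$ (equivalently $r>(5g+22)/7$), and derives a contradiction by a refined slope analysis of the semistable surface fibration representing $C$, splitting into cases according to the degree of the map to projective space induced by the ample part $A^{1,0}_{\Cbar}$, and invoking sharpened slope inequalities (\autoref{thm-1-1} and the double-cover estimates of \autoref{thminvariants-double-fibration}) together with the logarithmic Miyaoka--Yau inequality. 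The corestriction examples are then shown to satisfy the numerical hypothesis for suitable parameters. Neither this paper nor the earlier ones bridges the gap between the two regimes, which is why the conjecture remains open.
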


Here $\Tcal_g^\circ$ is the scheme-theoretic image of the Torelli morphism $\Mcal_g\ra\Acal_g$, where $\Acal_g$ is the Siegel modular variety with suitably chosen level structure so that corresponding moduli functor is representable, and the similar constraint on level structure is understood for $\Mcal_g$.

The Andr\'e-Oort conjecture holds for $\Acal_g$ (regardless of the level structures), cf. \cite{tsimerman}, and it implies the equivalence of \autoref{conjecture oort} with the original conjecture of Coleman claiming the finiteness of CM points in $\Tcal_g^\circ$ for $g$ sufficiently large. In the particular case of dimension one, the Oort conjecture predicts that for $g$ sufficiently large, $\Tcal_g^\circ$ meets any Shimura curve in at most finitely many points when $g$ is large enough. 

Previous works, cf. \cite{chen lu zuo compositio}, \cite{chen lu tan zuo asian} etc. have proved the conjecture for certain Shimura subvarieties whose canonical Higgs bundles contain large unitary subbundles, and the main technique is motivated from surface fibration. Roughly speaking, if a Shimura subvariety $M$ of dimension $>0$ is contained generically in $\Tcal_g^\circ$, then one finds a curve $C$ of generic position lying in $M\cap\Tcal_g^\circ$: \begin{itemize}
	\item the inclusion $C\subset\Tcal_g^\circ$ lifts $C$ into a curve in $\Mcal_g$ which, after suitable compactification and normalization, supports a semi-stable surface fibration $\fbar:\Sbar\ra\Cbar$, and inequality of Xiao's type bounds the maximal slope in the Hodge bundle $\fbar_*\omega_{\Sbar/\Cbar}$ in terms of the degree of $\fbar_*\omega_{\Sbar/\Cbar}$, which leads to an upper bound on the rank of unitary part in the Hodge bundle;
	
	\item on the other hand, the Hodge bundle above is induced from the Hodge bundle on $C$ due to the modular interpretation of $C\mono M\mono\Acal_g$, and a fine description of the symplectic representation defining $M\mono\Acal_g$ leads to an explicit lower bound of the unitary part in the Hodge bundle. 
\end{itemize}Combining these two ingredients one reaches the generic exclusion of Shimura curves when the unitary part in the canonical Higgs bundle is large.

In this paper we are interested in the case of Shimura curves whose canonical Higgs bundles only contain small unitary subbundles: 

\begin{theorem}\label{thm-main-1}
	Let $C\subseteq \cala_g$ be any Shimura curve whose associated logarithmic Higgs bundle $(E_{\ol C},\theta_{\ol C})$
	decomposes as
	$$(E_{\ol C},\theta_{\ol C})=(A_{\ol C},\theta_{\ol C}|_{A_{\ol C}}) \oplus (F_{\ol C},0),$$
	where $A_{\ol C}^{1,0}$ is ample and $F_{\ol C}$ is unitary and flat.
	Assume that $\rank F_{\ol C}^{1,0}\leq \frac{2g-22}{7}$
	(equivalently, $\rank A_{\ol C}^{1,0}>\frac{5g+22}{7}$).
	Then $C$ is not contained generically in the Torelli locus $\calt_g$
	of curves of genus $g$.
\end{theorem}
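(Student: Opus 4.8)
The plan is to run the ``surface fibration'' strategy of the introduction by contradiction. Suppose $C$ is generically contained in $\calt_g$, i.e. the generic point of $C$ corresponds to the Jacobian of a smooth genus-$g$ curve. Then, after replacing $C$ by a suitable finite \'etale cover (which only rescales the degree-type invariants below, hence is harmless for the final numerical conclusion), the inclusion into $\calt_g$ lifts to a map $C\to\Mcal_g$; compactifying the base, normalising, and passing to a semistable model produces a relatively minimal, non-isotrivial (because $A_{\ol C}^{1,0}$ is ample), semistable fibration $\fbar\colon\Sbar\to\ol C$ of genus $g$ with singular fibres over $S:=\ol C\setminus C$. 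Write $d:=\deg\Omega^1_{\ol C}(\log S)$, $q:=\rank A_{\ol C}^{1,0}$, $s:=\rank F_{\ol C}^{1,0}$ (so $q+s=g$), and let $\delta$ be the total number of nodes in the singular fibres.

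The first task is to pin down the Hodge bundle $\fbar_*\omega_{\Sbar/\ol C}$. By the Torelli description of $C\subset\cala_g$ it is canonically identified with $E_{\ol C}^{1,0}=A_{\ol C}^{1,0}\oplus F_{\ol C}^{1,0}$, the Kodaira--Spencer map being $\theta_{\ol C}$. Here $\deg F_{\ol C}^{1,0}=0$ because $F_{\ol C}$ is unitary and flat, while, $C$ being a Shimura curve, the Higgs field on the ample part is maximal, i.e. $\theta_{\ol C}|_{A_{\ol C}}\colon A_{\ol C}^{1,0}\xrightarrow{\ \sim\ }A_{\ol C}^{0,1}\otimes\Omega^1_{\ol C}(\log S)$; combined with $A_{\ol C}^{0,1}\cong(A_{\ol C}^{1,0})^{\vee}$ this forces $\deg A_{\ol C}^{1,0}=\tfrac12 qd$, and splitting $A_{\ol C}$ into its simple maximal-Higgs factors shows $A_{\ol C}^{1,0}$ is slope-semistable of slope $\tfrac12 d$. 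Since $A_{\ol C}^{1,0}$ is ample (so $\mu_{\min}>0$) and $F_{\ol C}^{1,0}$ is unitary (so $\mu=0$), the Harder--Narasimhan filtration of $\fbar_*\omega_{\Sbar/\ol C}$ is $0\subset A_{\ol C}^{1,0}\subset\fbar_*\omega_{\Sbar/\ol C}$, with top piece $A_{\ol C}^{1,0}$ of rank $q$ and slope $\tfrac12 d$, and $\deg\fbar_*\omega_{\Sbar/\ol C}=\tfrac12 qd$.

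The second task is to feed this data into the geometry of $\fbar$. On a general fibre the restriction of $\fbar_*\omega_{\Sbar/\ol C}$ is the canonical system, and the rank-$q$ subspace coming from $A_{\ol C}^{1,0}$ spans a special sub-linear-system, so a Clifford-type estimate bounds its fibrewise moving degree from below by roughly $2q-2$, with the usual degradation when the general fibre is hyperelliptic or of small Clifford index. Plugging the Harder--Narasimhan data and this estimate into a Xiao-type slope inequality (as exploited in \cite{chen lu zuo compositio}, \cite{chen lu tan zuo asian}) produces a lower bound for $K^2_{\Sbar/\ol C}$ which, since $\deg\fbar_*\omega_{\Sbar/\ol C}=\tfrac12 qd$ is forced to be large once $q$ is close to $g$, eventually exceeds the upper bound for $K^2_{\Sbar/\ol C}$ coming from Tan's canonical class inequality (used together with Noether's identity $K^2_{\Sbar/\ol C}=12\deg\fbar_*\omega_{\Sbar/\ol C}-\delta$ and the elementary bounds $0\le\delta\le(3g-3)\#S$ and $\#S\le d+2$). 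Running this comparison through the various gonality/Clifford-index regimes — the hyperelliptic one being the most delicate and providing the binding estimate — yields a numerical inequality of the shape $2q-5s<22$, i.e. $\rank A_{\ol C}^{1,0}<\tfrac{5g+22}{7}$, contradicting the hypothesis. Hence $C$ cannot be generically contained in $\calt_g$.

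The step I expect to be the main obstacle is this last comparison. Producing the precise constants $7$ and $22$ requires the sharp form of the slope inequality in each gonality/Clifford-index range, a careful accounting of the node number $\delta$ under the semistable reduction, and a separate, self-contained treatment of hyperelliptic fibrations, for which Xiao's slope inequality in its basic form is too weak and one must instead use the extra structure — the relative hyperelliptic involution and the presentation as a double cover of a ruled surface — to bound the rank $q$ of the ample part.
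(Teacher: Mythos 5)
Your setup (semistable model, identification of the Hodge bundle with $A_{\ol C}^{1,0}\oplus F_{\ol C}^{1,0}$, the maximal-Higgs computation $\deg A_{\ol C}^{1,0}=\tfrac12\,\rank A_{\ol C}^{1,0}\cdot\deg\Omega^1_{\ol C}(\log S)$, and the plan ``slope lower bound for $\omega^2_{\ol S/\ol B}$ versus an upper bound'') matches the paper's strategy in outline, but the quantitative core is missing, and you say so yourself. The paper's pivot is the map $\wt \Phi:\wt S\to\wt Y\subseteq\bbp_{\ol B}(A^{1,0}_{\ol B})$ induced by the \emph{ample part alone}. Lemma 3.1 shows that under $r>\frac{5g+22}{7}$ one has $\deg\wt\Phi\le 2$, and---crucially---that if $\deg\wt\Phi=2$ the induced involution acts by $-1$ on $V_A$, so the Hodge bundle of the quotient fibration $\tilde h$ lands in $F^{1,0}_{\ol B}$, forcing $\tilde h$ to be locally trivial with fibre genus $\gamma<\frac{2g-22}{7}$. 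The two cases are then handled by different machinery: for $\deg\wt\Phi=1$, a refined slope inequality $\omega^2_{\ol S/\ol B}\ge\frac{7r+3g-12}{2r}\deg\bar f_*\omega_{\ol S/\ol B}+\cdots$, obtained by estimating ranks and slopes of the images of $S^2A^{1,0}_{\ol B}$ and $A^{1,0}_{\ol B}\otimes\bar f_*\omega_{\ol S/\ol B}$ inside $\bar f_*(\omega^{\otimes2}_{\ol S/\ol B})$ via the Clifford plus theorem and Castelnuovo's bound applied to the subsystem cut out by $V_A$ (not the full canonical system); for $\deg\wt\Phi=2$, the explicit numerical invariants of double-cover fibrations over a locally trivial base of positive genus, together with the claims that no hyperelliptic fibre with compact Jacobian occurs and that $\gamma\ge\frac{g-r-1}{2}$. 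This is where the constants $7$ and $22$ actually arise; your proposal does not produce them.

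Your diagnosis of where the difficulty sits is also off target. You expect the binding case to be honest hyperelliptic fibrations (double covers of a ruled surface); in the paper the delicate case is a double cover of a \emph{locally trivial fibration of positive genus} $\gamma$, and hyperelliptic fibres with compact Jacobian are shown not to exist at all (so $\Lambda=\emptyset$ there). Moreover, the upper bound against which the slope inequality is played is not Tan's canonical class inequality plus a crude node count $\delta\le(3g-3)\#S$; it is the Arakelov-type identity for the Shimura curve itself (\cite[Corollary 3.6]{lz-17a}), with Noether's equality used only to control the hyperelliptic correction term $|\Lambda|$. So the proposal is a reasonable plan but not a proof: the decisive steps---bounding $\deg\wt\Phi$, the involution argument forcing local triviality of the quotient, and the two case-specific slope inequalities---are absent.
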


Mumford has considered embeddings of Shimura curves into $\Acal_g$ using  symplectic representation defined by corestriction of quaternion algebras, which is different from standard construction using restriction of scalars. In this paper we consider a partial interpolation between restriction and corestriction, and the unitary portion in the Higgs bundles on Shimura curves embedded in this way could be small upon suitable choice of parameters, terminology and details for which are given in Section 4:

\begin{corollary}\label{main corollary mumford} Let $C\mono\Acal_g$ be a Shimura curve defined in the following way: \begin{itemize}
		\item[(i)] either $C$ is associated to a quaternion $F$-algebra over a totally real field $F$, or $C$ is associated to an Hermitian form $h:E^2\times E^2\ra E$ for some CM field $E$ of totally real part $F$, and the embedding $C\mono\Acal_g$ is associated to the partial corestriction of index $t$;
		\item[(ii)] or $C$ is associated to a quaternion division $E$-algebra for some CM field $E$ of totally real part $F$ and some Hermitian pairing $A\times A\ra A$, and $C\mono\Acal_g$ is associated to the partial corestriction of index $t$.
	\end{itemize} Here $t$ is a positive integer not exceeding  the degree $d=[F:\Qbb]$. Then $C$ is NOT contained generically in $\Tcal_g^\circ$ as long as $\frac{t}{d}>\frac{5}{7}+\frac{22}{7g}$, where $g=2^t\binom{d}{t}$ in case (i) and $g=4^t\binom{d}{t}$ in case (ii).
	
\end{corollary}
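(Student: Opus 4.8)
The plan is to deduce \autoref{main corollary mumford} from \autoref{thm-main-1}; the only real task is to make the partial corestriction construction explicit enough to identify the decomposition of the canonical logarithmic Higgs bundle of $C$ and to compute the rank of its ample part. In each of the cases (i), (ii) the partial corestriction of index $t$ produces a reductive $\Qbb$-group $\Gbf$ together with a symplectic representation $\Gbf\to\GSp_{2g}$ whose associated Shimura datum is one-dimensional, so that one obtains a Shimura curve $C\mono\Acal_g$ with $g=2^t\binom{d}{t}$ in case (i) and $g=4^t\binom{d}{t}$ in case (ii). By construction the representation is assembled from the $\binom{d}{t}$ ``partial tensor blocks'' indexed by the $t$-element subsets $S$ of the set $\Sigma$ of archimedean places of $F$: the rational variation of Hodge structure underlying $C$ decomposes as $\Vbb=\bigoplus_{|S|=t}\Vbb_S$, and correspondingly the logarithmic Higgs bundle splits into Higgs subbundles
\[
(E_{\ol C},\theta_{\ol C})=\bigoplus_{|S|=t}(E_{\ol C,S},\theta_{\ol C,S}).
\]

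The crucial feature, which I would extract from the analysis in Section 4, is a dichotomy governed by the distinguished archimedean place $v_0$ of $F$ --- the unique place at which the quaternion algebra is split, resp. the Hermitian (resp. quaternionic) datum has the signature forcing a one-dimensional symmetric space. If $v_0\in S$, then $\Vbb_S$ is the twist by the uniformizing $\Sp_2$-factor at $v_0$ of a unitary flat system coming from the remaining places of $S$, so $\theta_{\ol C,S}$ is maximal and $E_{\ol C,S}^{1,0}$ is of the form $L\otimes U_S$ with $L^{\otimes 2}\cong\Omega^1_{\ol C}(\log)$ of positive degree (the curve being hyperbolic) and $U_S$ unitary flat; such a bundle is semistable of positive slope on a curve, hence ample. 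If $v_0\notin S$, then every tensor factor of $\Vbb_S$ comes from a definite (compact) local component, so $\Vbb_S$ has finite monodromy, is unitary and flat, and $\theta_{\ol C,S}=0$. Setting
\[
A_{\ol C}=\bigoplus_{v_0\in S}E_{\ol C,S},\qquad F_{\ol C}=\bigoplus_{v_0\notin S}E_{\ol C,S},
\]
we obtain exactly the decomposition $(E_{\ol C},\theta_{\ol C})=(A_{\ol C},\theta_{\ol C}|_{A_{\ol C}})\oplus(F_{\ol C},0)$ required in \autoref{thm-main-1}, with $A_{\ol C}^{1,0}$ ample and $F_{\ol C}$ unitary and flat.

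It remains to compute $\rank A_{\ol C}^{1,0}$. The blocks $\Vbb_S$ all have the same rank --- whether or not $v_0\in S$, each $\Vbb_S$ involves the same number of two- (in case (i)) resp. four- (in case (ii)) dimensional tensor factors --- so each contributes the same rank $g/\binom{d}{t}$ to $E_{\ol C}^{1,0}$. Since there are $\binom{d-1}{t-1}$ subsets $S\subseteq\Sigma$ of size $t$ with $v_0\in S$, the elementary identity $\binom{d-1}{t-1}\big/\binom{d}{t}=t/d$ gives
\[
\rank A_{\ol C}^{1,0}=\binom{d-1}{t-1}\cdot\frac{g}{\binom{d}{t}}=\frac{t}{d}\,g .
\]
Hence the hypothesis $\rank A_{\ol C}^{1,0}>\frac{5g+22}{7}$ of \autoref{thm-main-1} reads $\frac{t}{d}g>\frac{5g+22}{7}$, i.e.\ $\frac{t}{d}>\frac{5}{7}+\frac{22}{7g}$, which is precisely the assumption of the corollary. \autoref{thm-main-1} then yields that $C$ is not generically contained in $\Tcal_g^\circ$, proving \autoref{main corollary mumford}.

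All the geometric work is done by \autoref{thm-main-1}; accordingly I expect the main obstacle to lie entirely in the setup of Section 4, namely in verifying that the partial corestriction really does produce a genuine Shimura datum of dimension one carrying a $\Qbb$-rational symplectic form --- a Mumford-type check of Deligne's axioms together with the existence of a polarization, where the parity of $t$ may force an auxiliary tensoring with a two-dimensional symplectic space (this is what is absorbed into the stated values of $g$) --- and that the Higgs bundle of the resulting abelian scheme over $C$ is controlled by the block decomposition above: in particular that the off-$v_0$ factors give finite, not merely bounded, monodromy, so that $F_{\ol C}$ is flat, and that the $v_0$-block carries a maximal Higgs field, so that $A_{\ol C}^{1,0}$ is genuinely ample and not just nef and big. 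Granting these structural inputs, the rank identity displayed above is the only computation that is needed.
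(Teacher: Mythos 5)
Your proposal is correct and follows essentially the same route as the paper: Section 4.2 decomposes the representation into the $\binom{d}{t}$ tensor blocks indexed by $t$-element subsets $T$ of the real places, observes that a block is flat unitary precisely when the distinguished place $\tau_{1,1}$ is absent and otherwise carries signature $(2^{t-1},2^{t-1})$ with ample $(1,0)$-part, and uses the same identity $\binom{d-1}{t-1}/\binom{d}{t}=t/d$ to get $\rank A_{\ol C}^{1,0}=\tfrac{t}{d}g$ before invoking \autoref{thm-main-1}. The only point you leave implicit that the paper makes explicit is that in the general setup (partial corestriction along $L\mono F$ with $L$ possibly larger than $\Qbb$) the bound $\rank A_{\ol C}^{1,0}\le g/[L:\Qbb]$ forces $L=\Qbb$ for the hypothesis of \autoref{thm-main-1} to be satisfiable, which is why the corollary is stated with $d=[F:\Qbb]$.
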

The notion of partial corestriction is defined in Section 4 as an interpolation between  the usual notions of restriction and corestriction of semi-simple algebras, and $t$ is a positive integer not exceeding $d$. 

The material is organized as follows. Section 2 recalls preliminaries on Shimura curves and Higgs bundles, including a description of forms of $\SL_{2,F}$ that could define Shimura curves. Section 3 contains the proof of the main theorem on the generic exclusion of Shimura curves from $\Tcal_g^\circ$ with small unitary part in the canonical Higgs bundle. Section 4 discusses the notion of partial corestriction, the related Hermitian forms giving rise to symplectic representation, and ends with an elementary computation for \autoref{main corollary mumford}.

\subsection*{Notations} We write $\Sbb$ for the Deligne torus $\Res_{\Cbb/\Rbb}\Gbb_\mrm$. If $\sigma:k\ra K$ is a homomorphism of rings and $\Hbf$ is a $k$-scheme, then we write $\Hbf(K,\sigma)$ for the set of $K$-valued points of $\Hbf$ with respect to the structure of $k$-algebra given by $\sigma$; this is often the case when we need to distinct the $k$-structures on $\Hbf(K)$ involving different embeddings of fields $k\mono K$.

\section{Preliminaries on Shimura curves and Higgs bundles}

\subsection{Shimura curves and quaternion algebras}
We refer to \cite{chen lu zuo compositio} for our convention on notions such as Shimura (sub)data and Shimura (sub)varieties.  In particular, the Siegel modular variety $\Acal_g:=\Gamma\bsh \Hcal_g^+$ is the connected Shimura variety associated to the connected Shimura datum $(\GSp_{2g},\Hcal_g;\Hcal_g^+)$, where $\Hcal_g^+$ is the Siegel upper half space of genus $g$, and we choose $\Gamma$ to be a torsion free congruence subgroup in $\Sp_{2g}(\Zbb)$, so that the smooth quasi-projective variety $\Acal_g$ represents the corresponding moduli problem (with level-$\Gamma$ structure).

By Shimura curves, we mean   connected Shimura varieties of dimension one. Such a curve is defined by a connected Shimura datum $(\Gbf,X;X^+)$, where $X^+$ is a one-dimensional Hermitian symmetric domain, namely the Poincar\'e upper half-plane $\Hcal^+$. 

This already forces $\Gbf^\der$ to be a $\Qbb$-simple $\Qbb$-group, and according to \cite{deligne pspm} it has to be of the form $\Res_{F/\Qbb}\Hbf$ for some $F$-group $\Hbf$ which remains simple after the base change $F\mono \Fbar$. Here $F$ is a totally number field, and $\Fbar$ is a fixed separable closure of $F$. Since $X^+$ is the Poincar\'e upper half-plane, the $F$-group $\Hbf$ has to be a simple $F$-group of type $A_1$, i.e. it is an $F$-form of either $\SL_{2,F}$ or $\PGL_{2,F}$. Moreover, among the real embeddings $\{\tau\}$ of $F\mono\Rbb$, there is exactly one embedding giving rise to a non-compact Lie group $\Hbf(\Rbb,\tau)$ isomorphic to $\SL_2(\Rbb)$ or $\PGL_2(\Rbb)$, and the other embeddings $\tau'$ lead to compact Lie groups $\Hbf(\Rbb,\tau')$.

One is mainly interested in Shimura curves $C$ inside a Siegel modular variety $\Acal_g$ defined by some inclusion of the form $(\Gbf,X;X^+)\mono(\GSp_{2g},\Hcal_g;\Hcal_g^+)$, and the modular interpretation of the inclusion $C\mono\Acal_g$  gives the canonical $\Qbb$-VHS of weight 1 on $C$, whose associated Higgs bundle $\Ecal_C$ plays an essential role in our work. Various properties of the Higgs bundles are read from the algebraic representation $\Gbf\mono\GSp_{2g}$. If the $F$-group $\Hbf$ above were an $F$-form of $\PGL_{2,F}$, then the algebraic representation $\Gbf\mono\GSp_{2g}$ would not produce $\Qbb$-VHS of odd weights. Hence $\Hbf$ has to be an $F$-form of $\SL_{2,F}$.

The following classification of forms of $\SL_{2,F}$ is found in \cite{platonov rapinchuk}, divided into the inner and outer cases. For simplicity we use the following convention of notation:

(i) If $B$ is a finite-dimensional unital $k$-algebra (not necessarily commutative), $k$ being a fixed base field, we write $\Gbb_\mrm^{B/k}$ for the linear $k$-group sending a $k$-algebra $R$ to $(B\otimes_kR)^\times$, and sometimes we write $\Gbb_\mrm^B$ if $k$ is clear from the context. If $k'\subset k$ is a subfield with $[k:k']<\infty$, then we have $\Gbb_\mrm^{B/k'}\isom\Res_{k/k'}\Gbb_\mrm^{B/k}$.

(ii) If $B$ is a central simple $k$-algebra of dimension $m^2$, then $\Gbb_\mrm^{B/k}$ is a $k$-form of $\GL_{m,k}$, endowed with the reduced norm $\Nm_{B/k}\Gbb^B_\mrm\ra\Gbb_{\mrm,k}$ which is a $k$-form of the determinant map $\det:\GL_m\ra\Gbb_\mrm$, and we denote its kernel by $\Ubb^{B/k}$, which is a $k$-form of $\SL_m$.

We also write $\Hbb$ for Hamilton's quaternion division $\Rbb$-algebra, associated to which we have $\SU_2\isom\Ubb^{\Hbb/\Rbb}$.

Case (1):

 The inner case of the classification involves a central simple $F$-algebra $A$ and we have $\Hbf\isom\Ubb_\mrm^{A/F}$. Note that $A$ splits over $F$, i.e. $A\isom\Mat_2(F)$, if and only if $\Hbf$ splits over $F$, i.e. $\Hbf\isom\SL_{2,F}$.

Case (2): 
The outer case involves an Hermitian form, and we recall the more general description for outer forms of $\SL_{mn,F}$: there exists some quadratic extension $E$ of $F$, a central simple $E$-algebra $D$ of $E$-dimension $n^2$ which is a skew field, endowed with an involution of second kind (i.e. restricting to the $F$-conjugate on $E$), and an Hermitian pairing $H:D^{\oplus m}\times D^{\oplus m}\ra D$ of Hermitian matrix $\Phi$ under the natural $D$-basis of $D^{\oplus m}$, such that the following group functor $\Ubf_\Phi$ is an $F$-form of $\GL_{mn,F}$: an $F$-algebra $R$ is sent to $$\{g\in\Mat_m(D):g^*\Phi g=\Phi,\ g\ \mathrm{invertible}\}$$ and its derived part is an $F$-form of $\SL_{mn,F}$. The constraint $mn=2$ thus leads to: \begin{itemize}
	\item[(2-1)] either $n=1$ and $m=2$: namely $H$ is an Hermitian form $E^2\times E^2\ra E$, $(v,w)\mapsto \bar{v}^t\Phi w$ for some Hermitian matrix $\Phi=\bar{\Phi}^t$;
	\item[(2-2)] or $n=2$ and $m=1$: namely $D$ is a quaternion division $E$-algebra and $H:D\times D\ra D$ is of the form $(a,b)\mapsto a^*\delta b$ for some $\delta=\delta^*$ in $D$.
\end{itemize}
Note that in (2-2), $D$ is of dimension 4 over $E$, and the composition $h=\tr_{D/E}\circ H$ of $H$ with the reduced trace of $D$ over $E$ is an Hermitian form $D\times D\ra E$, and the outer form in this case is an $F$-subgroup of the unitary $F$-group $\Ubf_h$.

In our case of interest for Shimura curves, we have a $\Qbb$-group $\Gbf$ with $\Gbf^\der=\Res_{F/\Qbb}\Hbf$ for $F$ a totally real field of degree $d$, such that $\Gbf^\der(\Rbb)^+$ defines a connected Hermitian symmetric domain of dimension 1, namely the Poincar\'e upper half plane. Write $\tau_1,\cdots,\tau_d$ for the real embeddings of $F$, we have $\Gbf^\der(\Rbb)\isom\prod_{i=1,\cdots,d}\Hbf(\Rbb,\tau_i)$, where $\Hbf(\Rbb,\tau_i)$ stands for the $\Rbb$-points of $\Hbf$ with respect to the $F$-structure $\tau_i:F\mono\Rbb$ on $\Rbb$, and we may rearrange the subscripts so that \begin{itemize}\item $\Hbf(\Rbb,\tau_1)=\SL_2(\Rbb)$; \item $\Hbf(\Rbb,\tau_i)=\SU_2(\Rbb)$ for $i=2,\cdots,d$.\end{itemize} Thus for the $F$-forms described above for $\Hbf$, we have: \begin{itemize}
	\item[(1)] in the inner case, $A$ is an quaternion $F$-algebra such that $A\otimes_{F,\tau_1}\Rbb\isom\Mat_2(\Rbb)$ and $A\otimes_{F,\tau_i}\Rbb\isom\Hbb$ for $i=2,\cdots,d$;
	\item[(2)] in the outer case: \begin{itemize}
		\item[(2-1)] either $\Hbf$ is associated to an Hermitian form $h:E^2\times E^2\ra E$ which is indefinite (i.e. of signature $(1,1)$) along $\tau_1$, giving rise to a factor $\SU(1,1)\isom\SL_{2,\Rbb}$, and definite along $\tau_2,\cdots,\tau_d$ giving rise to the compact factor $\SU_2(\Rbb)$; note that $K$ has to be purely imaginary over $F$ in this case, and thus $K$ is a CM field of real part $F$;
		
		\item[(2-2)] or $\Hbf$ is associated to an Hermitian form $Hi:A\times A\ra A$ with $A$ a quaternion division $E$-algebra whose signatures follow the same pattern as above: becoming $\SU(1,1)$ along $\tau_1$ and $\SU_2(\Rbb)$ along $\tau_2,\cdots,\tau_d$, and $E$ is a CM field.
	\end{itemize}
\end{itemize}

For the construction of Shimura data $(\Gbf,X;X^+)$, it is known that $\Gbf$ only differ from $\Res_{F/\Qbb}\Hbf$ by a central $\Qbb$-torus. For example, in the outer case $H:D\times D\ra D$, we may compose $H$ with the reduced trace $D\ra E$ and get an Hermitian form $h:D\times D\ra E$ whose imaginary part is a symplectic $F$-form $D\times D\ra F$. The $F$-group of unitary similitude $\Hbf'$ of $H$ differs from $\Hbf$ by a central $F$-torus $\Gbb_{\mrm F}$. Taking trace again from $F$ to $\Qbb$ gives a symplectic $\Qbb$-form on $D$ (viewing as a $\Qbb$-vector space), and we may take $\Gbf$ to be the $\Qbb$-subgroup of $\Res_{F/\Qbb}\Hbf'$ which only differs from $\Res_{F/\Qbb}\Hbf$ by the central $\Qbb$-torus $\Gbb_{\mrm \Qbb}$ in $\Res_{F/\Qbb}\Gbb_{\mrm F}$. This is often used in the construction of Shimura subdata of $(\GSp_{2g},\Hcal_g;\Hcal_g^+)$, cf. \cite{hida shimura}.

Finally, it should be mentioned that  quaternion algebras and Shimura curves from Case (1) can be reduced to Case (2-1): for the application we have in mind, the field $F$ in Case (1) is a totally real number field, and by choosing $E$ a CM number field of totally real part $F$ such that $A\otimes_FE\isom\Mat_2(E)$, we obtain an involution of second kind on $\Mat_2(E)$ which is the transposed conjugate on coordinates with fixed part isomorphic to $A$, and $\Ubb^{A/F}$ can be identified with the special unitary $F$-group $\SU_h$ of the standard Hermitian form $E^2\times E^2\ra E, (u,v)\mapsto\bar{u}^tv$. It even suffices to take $E$ to be $F\otimes_\Qbb K$ with $K$ some imaginary quadratic number field, similar to the construction used in \cite{carayol bad reduction}, which realizes Shimura curves in Case (1) as a Shimura curve of PEL-type in Case (2).

\subsection{Decomposition of Higgs bundles}

Let $(V,\psi)$ be a symplectic $\Qbb$-space giving rise to a Shimura datum $(\GSp_V,\Hcal_V;\Hcal_V^+)$ and the Siegel modular variety $\Acal_V=\Gamma\bsh\Hcal_V^+$ for suitable torsion-free congruence subgroup $\Gamma$ in $\Sp_V(\Qbb)$, and we may assume that $\Gamma$ stabilizes a $\Zbb$-structure $V_\Zbb$ for $V$. For $M\mono\Acal_V$ a Shimura subvariety defined by some subdatum $(\Gbf,X;X^+)$, the modular interpretation of $\Acal_V$ gives a universal abelian $M$-scheme $f:A\ra M$ and a $\Qbb$-PVHS on $M$, whose underlying local system in $\Qbb$-vector spaces $\Vbb_M=Rf_*\Qbb_A$ is determined by the representation of fundamental group $\pi_1(M)\ra\GL_V(\Qbb)$, which in turn is determined by the algebraic representation $\Gbf^\der\ra\Sp_V$. The Hodge filtration of $\Vcal_M:=\Vbb_M\otimes_{\Qbb_M}\Ocal_M$ gives $$0\ra R^0f_*\Omega_{M/A}\ra \Vcal_M\ra R^1f_*\Ocal_A\ra 0$$ and we have the canonical Higgs bundle $\Ecal_M=\Ecal_M^{0,1}\oplus\Ecal_M^{1,0}$ with $\Ecal_{M}^{0,1}=R^1f_*\Ocal_A$ and $\Ecal_M^{1,0}=R^0f_*\Omega^1_{A/M}$. More generally, the graded quotient of the Hodge filtration $F^\cdot\Vcal$ for any PVHS $\Vcal$ on $M$ is a Higgs bundle on $M$, and for smoothly compactified Shimura varieties (by joining boundary divisors using toroidal compactification) we have a similar notion of Higgs bundles with logarithmic poles. 

The theory of Simpson correspondence implies that, upon suitable choice of smooth compactification, there is a category equivalence between $\Cbb$-linear representations of $\pi_1(M)$ and logarithmic Higgs bundles on $M$. In particular, Higgs subbundles of $\Ecal_M$ (or rather, its logarithmic version over smooth compactification) associated to sub-$\Rbb$-PVHS of $(\Vcal_M,\Vbb_M\otimes_{\Qbb_M}\Rbb_M)$ corresponds to $\Rbb$-linear subrepresentations of $\pi_1(M)\ra\GL_\Rbb(V_\Rbb)$, which are in turn characterized by algebraic subrepresentations of $\Gbf^\der_\Rbb\mono\Sp_{V_\Rbb,\Rbb}$. Such a Higgs subbundle is unitary if and only if the corresponding $\Rbb$-subrepresentation factors through a compact linear $\Rbb$-group.

\section{Generic exclusion of Shimura curves}

In this section, we will prove \autoref{thm-main-1} by contradiction.
The strategy is along a similar way as that of \cite{lz-17a},
where the special case with trivial unitary part has been considered.
Assume that such a Shimura curve $C$ is contained generically in the Torelli locus $\calt_g$.
Since suitable level structures are pre-attached in our setting,
one may represent $C$ by a semi-stable family $f:\,\ol S \to \ol B$ of
curves of genus $g$ as in \cite[\S\,3]{lz-17a}.
By studying the slope inequality of such a semi-stable family together with
the logarithmic Miyaoka-Yau inequality, one deduces a contradiction.

\subsection{Set-ups}\label{sec-1-1}
Given such a Shimura curve $C$ contained generically in the Torelli locus $\calt_g$,
one obtains as in \cite[\S\,3]{lz-17a} a semi-stable family $f:\,\ol S \to \ol B$ of curves of genus $g$
representing $C$ by taking suitable level structure into account.
The natural map
$$\bar f^*A^{1,0}_{\ol B} \hookrightarrow \bar f^*\bar f_*\omega_{\ol S/\ol B} \lra \omega_{\ol S/\ol B}$$
induces a rational map $\ol \Phi_{A}:\,\ol S \dashrightarrow \bbp_{\ol B}(A^{1,0}_{\ol B})$ over $\ol B$.
By resolution of possible singularities on the image and a suitable sequence of blowing-ups
$\sigma:\,\wt S \to \ol S$ (which does not affect the general fiber $\ol F$),
the above rational map becomes a morphism $\wt \Phi:\, \wt S \to \wt Y$.
$$\xymatrix{
	\wt S \ar[rr]^-{\wt \Phi} \ar[rd]_-{\tilde f:=\bar f\circ \sigma} && \wt Y \ar[dl]^-{\tilde h}\\
	&\ol B&}$$
By contracting vertical exceptional curves, we may assume that $\tilde h$ is relatively minimal.
Let $M\in \Pic(\wt S)$ be the moving part of the pull-back of the tautological line bundle $H$ on $\bbp_{\ol B}(A^{1,0}_{\ol B})$.
Denote by $\Gamma$ the image of the general fiber $\ol F$, and $\gamma=g(\Gamma)$.
Then
\begin{equation}\label{eqn-1-1}
h^0(\ol F, M|_{\ol F}) \geq h^0(\Gamma, H|_{\Gamma}) \geq r:=\rank A^{1,0}_{\ol B}=\rank A^{1,0}_{\ol C}.
\end{equation}

\begin{lemma}\label{lem-1-1}
	If $r>\frac{5g+22}{7}$, then either $\deg \wt \Phi\leq 2$. Moreover, if $\deg \wt \Phi = 2$,
	then $\tilde h$ is locally trivial with
	\begin{equation}\label{eqn-1-6}
	\gamma<\frac{2g-22}{7},
	\end{equation}
	where $\gamma$ is the genus of a general fiber of $\tilde h$.
\end{lemma}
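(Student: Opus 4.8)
My plan is to test the line bundle $M|_{\ol F}$ on a general fibre $\ol F$ of $\tilde f$ — a smooth curve of genus $g$ which $\sigma$ leaves untouched — against the elementary numerical constraints that govern maps of curves into projective space, using only that $M|_{\ol F}$ embeds into $\omega_{\ol F}$; the local triviality in the boundary case $\deg\wt\Phi=2$ I would then extract from the action of the degree-$2$ deck involution on the canonical Higgs bundle.

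\emph{Two numerical inputs.} Set $n:=\deg\wt\Phi$. First I would record that, since the defining map of $\ol\Phi_A$ factors through $\ol f^*A^{1,0}_{\ol B}\to\omega_{\ol S/\ol B}$, the pull-back $\wt\Phi^*H$ is a subsheaf of $\omega_{\wt S/\ol B}$ away from the $\sigma$-exceptional curves; restricting to the general $\ol F$ gives $M|_{\ol F}\hookrightarrow\omega_{\ol F}$, hence $\deg M|_{\ol F}\le 2g-2$. Secondly, $\wt\Phi|_{\ol F}\colon\ol F\to\Gamma$ has degree $n$, and composing $\Gamma\hookrightarrow\wt Y$ with the projection $\wt Y\to\bbp_{\ol B}(A^{1,0}_{\ol B})$ presents $\ol F$ as an $n$-sheeted cover of a nondegenerate curve of some degree $d$ inside the fibre $\bbp^{r-1}$; nondegenerate because the $r$ sections of $M|_{\ol F}$ cut from $A^{1,0}_{\ol B}$ are linearly independent, and $\Gamma$ maps birationally onto this curve because $\wt Y$ was chosen to be a resolution of the image. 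Writing $N$ for the pull-back to $\Gamma$ of $\Ocal(1)$, this gives $\deg N=d\ge r-1$, $h^0(\Gamma,N)\ge r$, and $M|_{\ol F}=(\wt\Phi|_{\ol F})^*N$, so that $\deg M|_{\ol F}=nd$.

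\emph{Bounding $\deg\wt\Phi$ and $\gamma$.} Combining the two inputs, $n(r-1)\le nd=\deg M|_{\ol F}\le 2g-2$. If $n\ge 3$ this forces $r\le\tfrac{2g+1}{3}$, and since $\tfrac{2g+1}{3}\le\tfrac{5g+22}{7}$ for every $g$, this contradicts $r>\tfrac{5g+22}{7}$; hence $\deg\wt\Phi\le 2$. When $n=2$ one has $d=\tfrac12\deg M|_{\ol F}\le g-1$. If $N$ were special on $\Gamma$, Clifford's inequality would give $r\le h^0(\Gamma,N)\le\tfrac d2+1\le\tfrac{g+1}{2}\le\tfrac{5g+22}{7}$, which is impossible; so $N$ is non-special, and Riemann--Roch yields $r\le h^0(\Gamma,N)=d-\gamma+1\le g-\gamma$. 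Therefore $\gamma\le g-r<g-\tfrac{5g+22}{7}=\tfrac{2g-22}{7}$, which is \eqref{eqn-1-6}.

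\emph{Local triviality when $\deg\wt\Phi=2$.} The degree-$2$ morphism $\wt\Phi$ produces a birational involution $\iota$ of $\wt S$ over $\ol B$ restricting on a general fibre to the deck transformation of $\ol F\to\Gamma$. Because $M|_{\ol F}=(\wt\Phi|_{\ol F})^*N$, the inclusion $M|_{\ol F}\hookrightarrow\omega_{\ol F}$ is $\iota$-equivariant up to a sign character; were that sign trivial, the $r$ sections $A^{1,0}_{\ol B}|_{\ol F}$ would sit inside $H^0(\ol F,\omega_{\ol F})^{\iota=1}\cong H^0(\Gamma,\omega_\Gamma)$, of dimension $\gamma<r$ by the previous step, which is absurd. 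Hence $\iota$ acts by $-1$ on $A^{1,0}_{\ol B}$, and so on all of $A_{\ol B}$. Since $\iota$ is an automorphism of the weight-one $\Qbb$-VHS underlying the canonical Higgs bundle $(E_{\ol B},\theta_{\ol B})=(A_{\ol B},\theta_{\ol B}|_{A_{\ol B}})\oplus(F_{\ol B},0)$ of $f$ (the pull-back of $(E_{\ol C},\theta_{\ol C})$ along $\ol B\to\ol C$), it respects this canonical decomposition; consequently the $\iota$-invariant sub-VHS — which is precisely $R^1\tilde h_*\Qbb$, the canonical VHS of $\tilde h\colon\wt Y\to\ol B$ — has Higgs bundle contained in $(F_{\ol B},0)$, so its Higgs field vanishes and its $(1,0)$-part, a subbundle of the unitary flat $F^{1,0}_{\ol B}$, has degree $0$. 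A semistable family of curves whose canonical Higgs field is zero (equivalently, with Hodge bundle of degree zero) is locally trivial, and this completes the argument.

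\emph{Expected main obstacle.} The delicate point is the last paragraph: pinning down that the sign of the $\iota$-action on $A^{1,0}_{\ol B}$ is $-1$ (so that the invariant part of the Higgs bundle really lands in the flat summand), together with the standard — but not purely formal — implication ``zero Higgs field $\Rightarrow$ locally trivial'' for semistable families of curves. The steps preceding it reduce essentially to the two observations $\deg M|_{\ol F}\le 2g-2$ and $\deg M|_{\ol F}=nd$ with $d\ge r-1$, combined with Clifford's theorem.
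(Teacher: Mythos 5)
Your proof is correct, and its overall skeleton coincides with the paper's: the degree bound comes from $2g-2\ge \deg M|_{\ol F}=n\cdot\deg(H|_{\Gamma})\ge n(r-1)$, and local triviality comes from showing the deck involution acts by $-1$ on the ample part, so that the invariant Hodge bundle $\tilde h_*\omega_{\wt Y/\ol B}$ lands in the flat summand $F^{1,0}_{\ol B}$ and has degree zero (the paper likewise invokes the Arakelov-type fact that degree zero forces local triviality). Two sub-arguments genuinely differ, and both of yours are sound. First, you obtain $\gamma\le g-r$ directly from Clifford plus Riemann--Roch applied to $N=H|_{\Gamma}$ on the smooth fibre $\Gamma$, so your genus bound \eqref{eqn-1-6} is independent of local triviality; the paper instead deduces it only \emph{after} local triviality, from the fact that the flat bundle $\tilde h_*\omega_{\wt Y/\ol B}$ of rank $\gamma$ sits inside $F^{1,0}_{\ol B}$, whose rank is $g-r$. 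Your route is logically a little cleaner. Second, for the sign: the paper takes an eigenbasis of $V_A$, supposes one eigenvector is $\tau$-invariant, and runs an explicit multiplicity computation ($a+2b=1+2c$ at points of the ramification divisor $R$) to force the fixed divisor $D_0\ge R$, hence $V_A\subseteq\Phi_0^*H^0(\Gamma,\omega_\Gamma)$ and $r\le\gamma$, a contradiction. You instead note that every element of $V_A$, divided by the section $s_0$ cutting out $D_0$, is pulled back from $\Gamma$, so $\iota$ acts on all of $V_A$ through the single scalar $\iota^*s_0/s_0=\pm1$, and the $+$ sign is killed by $\dim H^0(\ol F,\omega_{\ol F})^{\iota=1}=\gamma<r$. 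This is slicker, but be aware that your one-line claim ``the inclusion $M|_{\ol F}\hookrightarrow\omega_{\ol F}$ is $\iota$-equivariant up to a sign character'' is precisely where the content of the paper's divisor computation is hiding: it requires that $V_A$, hence $D_0$, is $\iota$-stable (so that $\iota^*s_0$ is a scalar multiple of $s_0$) and that under $M|_{\ol F}\cong\Phi_0^*N$ the subspace $V_A/s_0$ lies in $\Phi_0^*H^0(\Gamma,N)$; once these are spelled out, your argument is a complete substitute for the paper's.
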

\begin{proof}
	Let $\Phi_0:\,\ol F \to \Gamma\subseteq \bbp^{r-1}$ be the restricted morphism on the general fiber.
	Then it is clear that $\deg (\wt \Phi)=\deg (\Phi_0)$.
	By construction,
	$$\begin{aligned}
	2g-2\geq \deg(M|_{\ol F})&\,=\deg (\Phi_0)\cdot \deg(H|_{\Gamma})\\
	&\,\geq \deg(\Phi_0)\cdot \big(h^0(\Gamma,H|_{\Gamma})-1\big)\\
	&\,> \deg(\Phi_0)\cdot \Big(\frac{5g+22}{7}-1\Big)
	\end{aligned}$$
	Hence $\deg(\wt \Phi)=\deg(\Phi_0)\leq 2$ as required.
	Moreover, if $\deg \wt \Phi = 2$ and $\tilde h$ is locally trivial,
	then the Hodge bundle $\tilde h_*\omega_{\wt Y/\ol B}$ is flat of rank $\gamma$,
	and hence \eqref{eqn-1-6} follows since the pulling-back of $\tilde h_*\omega_{\wt Y/\ol B}$ under $\wt \Phi^*$
	is a direct summand of $\bar f_*\omega_{\ol S/\ol B}$.
	Therefore, it remains to show that $\tilde h$ is locally trivial if $\deg \wt \Phi = 2$.
	
	The decomposition
	\begin{equation}\label{eqn-1-2}
	\bar f_*\omega_{\ol S/ \ol B} = A^{1,0}_{\ol B} \oplus F^{1,0}_{\ol B}
	\end{equation}
	corresponds to a decomposition on $V:=H^0(\ol F,\omega_{\ol F})$:
	$$V=V_{A}\oplus V_{F}.$$
	The map $\Phi_0$ is exactly the map defined by the linear subsystem $\Lambda_A \subseteq |\omega_{\ol F}|$
	corresponding to $V_A$.
	If $\deg(\Phi_0)=2$, it induces an involution $\tau$ on $\ol F$.
	It is clear that the subsheaf $A^{1,0}_{\ol B}$, which is the ample part,
	is invariant under the induced action of $\tau$ on $\bar f_*\omega_{\ol S/\ol B}$.
	Hence $V_A$ is also invariant under the induced action of $\tau$ on $H^0(\ol F,\omega_{\ol F})$,
	i.e., $\tau^*(\omega)\in V_A$ for any $\omega\in V_A$.
	
	We claim that the induced action of $\tau$ on $V_A$ is the multiplication by $(-1)$.
	We prove the claim by contradiction.
	Since $V_A$ is invariant under the induced action of $\tau$,
	it admits a basis consisting of eigenvectors of $\tau$.
	Let $\{\omega_1,\cdots,\omega_r\}$ be such a basis of $V_A$, and $D_i=\div(\omega_i)$.
	Let $D_0$ be the fixed part of $\Lambda_A$.
	Then there exists a divisor $\Delta_i$ on $\Gamma$ for each $1\leq i\leq r$ such that
	$$D_i=D_0+\Phi_0^*(\Delta_i).$$
	Since $\tau$ is an involution, without loss of generality we may assume that $\tau^*\omega_1=\omega_1$ if the claim does not hold.
	It follows that $\omega_1=\Phi_0^*(\omega_1')$ for some $\omega_1'\in H^0(\Gamma,\omega_{\Gamma})$.
	Equivalently, $$D_1=R+\Phi_0^*(D_1'),$$
	where $D_1'=\div(\omega_1')$ and $R$ is the ramification divisor of $\Phi_0$.
	Therefore,
	$$D_0+\Phi_0^*(\Delta_1)=R+\Phi_0^*(D_1').$$
	Taking any point $p\in R$ and $q=\Phi_0(p)$, let $a\geq 0$ be the multiplicity of $p$ in $D_0$,
	and $b$ and $c$ be the multiplicities of $q$ in $\Delta_1$ and $D_1'$ respectively.
	Then the above equality implies that $a+2b=1+2c$.
	It follows that $a\geq 1$. Hence $D_0\geq R$; equivalently, $V_A \subseteq \Phi_0^*H^0(\Gamma,\omega_{\Gamma})$.
	In particular, $r=\dim V_A \leq \gamma$, which is a contradiction.
	
	Coming back to the proof,
	the above claim implies that the induced action of $\tau$ on $A^{1,0}_{\ol B}$ is also the multiplication by $(-1)$.
	Note that $\tilde h_*\omega_{\wt Y/\ol B}$ can be naturally viewed as a subsheaf of $\bar f_*\omega_{\ol S/ \ol B}$,
	and that $\tau$ acts trivially on  $\tilde h_*\omega_{\wt Y/\ol B}$.
	Hence $\tilde h_*\omega_{\wt Y/\ol B} \subseteq F^{1,0}_{\ol B}$.
	In particular, $\deg (\tilde h_*\omega_{\wt Y/\ol B})=0$, and hence $\tilde h$ is locally trivial as required.
\end{proof}

Thus the proof of \autoref{thm-main-1} is divided into two cases according to the value of $\deg \wt\Phi$.

\subsection{The case when $\deg(\wt \Phi)=1$}
\begin{proof}[Proof of \autoref{thm-main-1} when $\deg(\wt \Phi)=1$]
	We mimic the proof as in \cite{lz-17a}.
	It suffices to prove the following strict Arakelov inequality
	for the semi-stable fibration $\bar f:\,\ol S \to \ol B$ representing the Shimura curve $C$ generically in $\calt_g$.
	\begin{equation}\label{eqn-1-5}
	\deg\bar f _*\omega_{\ol S/\ol B}<\frac{r}{2}\cdot \left(\deg\Omega^1_{\ol B}(\log\Delta_{nc})-|\Lambda|\right),
	\qquad\text{where~$r=\rank A^{1,0}_{\ol C}$,}
	\end{equation}
	where $\Upsilon_{nc} \to \Delta_{nc}$ is the singular locus of $\bar f$ with non-compact Jacobian,
	and $\Lambda\subseteq B$ is the ramification divisor of the double cover $j_B:\,B \to C$ as in \cite[\S\,3]{lz-17a}.
	
	According to \cite[Theorem\,4.2]{lz-17a} together with \autoref{thm-1-1} below, one obtains
	$$\deg\bar f _*\omega_{\ol S/\ol B} \leq \frac{4r(g-1)}{3g+7r-12}\cdot \left(\deg\Omega^1_{\ol B}(\log\Delta_{nc})-|\Lambda|\right)+\frac{4r}{3g+7r-12}\cdot |\Lambda|.$$
	Note that $\omega_{\ol S/\ol B}^2\leq 12 \deg \bar f _*\omega_{\ol S/\ol B}$ by Noether's equality.
	Hence from \eqref{eqn-1-3} it follows that 
	$$|\Lambda|\leq \frac{17r-3g+12}{4r(g-2)}\cdot \deg\bar f _*\omega_{\ol S/\ol B}.$$
	Therefore,
	$$\begin{aligned}
	\deg\bar f _*\omega_{\ol S/\ol B} &\,\leq \frac{4r(g-1)(g-2)}{(7g-31)r+3(g-1)(g-4)}\cdot \left(\deg\Omega^1_{\ol B}(\log\Delta_{nc})-|\Lambda|\right),\\[1mm]
	&\,=\left(\frac{r}{2}-\frac{r}{2}\cdot\frac{(7g-31)r-(g-1)(5g-4)}{(7g-31)r+3(g-1)(g-4)}\right)\cdot \left(\deg\Omega^1_{\ol B}(\log\Delta_{nc})-|\Lambda|\right),\\[1mm]
	&\,<\frac{r}{2}\cdot \left(\deg\Omega^1_{\ol B}(\log\Delta_{nc})-|\Lambda|\right),
	\qquad \text{since $r>\frac{5g+22}{7}$}.
	\end{aligned}$$
	This proves \eqref{eqn-1-5}.
\end{proof}

To finish the proof, it remains to prove the following slope inequality.
\begin{theorem}\label{thm-1-1}
	Let $\bar f:\, \ol S \to \ol B$ be the family of semi-stable genus-$g$ curves representing
	a Shimura curve $C\Subset\calt_{g}$,
	and $\wt \Phi:\, \wt S \to \wt Y$ be the morphism induced by $A_{\ol B}^{1,0}$ as above.
	Assume that $\deg \wt\Phi=1$. Then
	%
	\begin{equation}\label{eqn-1-3}
	\begin{aligned}
	\hspace{-0.3cm}\omega_{\ol S/\ol B}^2 ~\geq&~ \frac{7r+3g-12}{2r}\deg \bar f_*\omega_{\ol S/\ol B}
	+2(g-2)\cdot |\Lambda|+\\[0.1cm]
	&\hspace{-0.3cm}\sum_{p\in \Delta_{ct}\,\cap\,\Lambda} 2\big(l_h(F_p)+l_1(F_p)-1\big)
	+\sum_{p\in \Delta_{ct} \setminus \Lambda}\big(3l_h(F_p)+2l_1(F_p)-3\big).
	\end{aligned}
	\end{equation}
	Here $\Lambda\subseteq B$ is the ramification divisor of the double cover $j_B:\,B \to C$ as in \cite[\S\,3]{lz-17a},
	$\Upsilon_{ct}\to \Delta_{ct}$ are the singular fibers with compact Jacobians,
	$l_i(F_p)$ is the number of components of geometric genus equal to $i$ in $F_p$,
	and $l_h(F_p)=\sum_{i\geq 2}l_i(F_p)$.
\end{theorem}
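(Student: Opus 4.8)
The plan is to prove \eqref{eqn-1-3} by Xiao's slope method, in the refined form of \cite{lz-17a}, the only new feature being that the positive part of the Hodge bundle now has rank $r<g$ rather than rank $g$; accordingly $r$ will play, almost everywhere, the role that $g$ plays in \cite{lz-17a}, while $g$ survives where the genus of the general fibre $\ol F$ itself enters (Clifford's bound, Noether's formula). Since $\deg\wt\Phi=1$ the morphism $\wt\Phi:\,\wt S\to\wt Y$ is birational, so the relatively moving part $M$ of the linear subsystem of $|\omega_{\ol S/\ol B}|$ cut out by $\bar f^{*}A^{1,0}_{\ol B}$ recovers $\omega_{\ol S/\ol B}$ up to an effective divisor: pulling back along $\sigma$ and writing $\sigma^{*}\omega_{\ol S/\ol B}=M+Z$ with $Z\geq 0$ the fixed-and-exceptional part, and using that both $\omega_{\ol S/\ol B}$ and $M$ are nef, one gets $\omega_{\ol S/\ol B}^{2}\geq M^{2}+M\cdot Z$. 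It therefore suffices to bound $M^{2}$ from below by the bulk term and to check that $M\cdot Z$, together with the blow-ups in $\sigma$ resolving the base locus of $\Lambda_A$ over the singular fibres, contributes at least the local corrections over $\Delta_{ct}$ and $\Lambda$.

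For the bulk term I would run Xiao's argument on the Harder--Narasimhan filtration $0=A_0\subsetneq A_1\subsetneq\cdots\subsetneq A_k=A^{1,0}_{\ol B}$: since $A^{1,0}_{\ol B}$ is ample all slopes $\mu_i$ are positive, and since $F^{1,0}_{\ol B}$ is flat one has $\deg\bar f_*\omega_{\ol S/\ol B}=\deg A^{1,0}_{\ol B}=\sum_i(\rank A_i-\rank A_{i-1})\mu_i$. Writing $d_i$ for the fibre degree of the moving part attached to $A_i$, Xiao's covering inequality bounds $M^{2}$ below by a weighted sum of the $d_i$ and of the successive slope differences $\mu_i-\mu_{i+1}$. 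One then uses that each such moving part is a special subsystem of $|\omega_{\ol F}|$, so Clifford gives $d_i\geq 2(\rank A_i-1)$, while $\deg\wt\Phi=1$ forces $\Lambda_A$ \emph{not} to be composed with an involution (in particular $\ol F$ is non-hyperelliptic) and yields $2(r-1)\leq d_k\leq 2g-2$ for the top degree. Substituting these constraints and optimising the Xiao sum against the fixed value $\deg\bar f_*\omega_{\ol S/\ol B}$ is a routine, if lengthy, computation producing $M^{2}\geq\frac{7r+3g-12}{2r}\deg\bar f_*\omega_{\ol S/\ol B}$ modulo the singular-fibre terms.

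The remaining and, in my view, hardest step is the local analysis of the singular fibres, which accounts for the terms $2(g-2)|\Lambda|$ and the two sums over $\Delta_{ct}$. Over $\Delta_{nc}$ no extra contribution is needed. Over $p\in\Delta_{ct}\setminus\Lambda$ one examines the dual graph of $F_p$ and the way $\Lambda_A$ restricts to it; a component-by-component case check, exactly as in \cite[\S\,3]{lz-17a}, shows that the fixed part and the blow-ups supported over $F_p$ contribute at least $3l_h(F_p)+2l_1(F_p)-3$. At the branch points of $j_B:\,B\to C$ the argument is different: over $p\in\Lambda$ the general fibre carries the involution induced by the deck transformation of $B\to C$, which forces an additional contribution globalising to $2(g-2)|\Lambda|$, and at $p\in\Delta_{ct}\cap\Lambda$ this replaces the generic local term by $2\big(l_h(F_p)+l_1(F_p)-1\big)$. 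Collecting the non-negative pieces gives \eqref{eqn-1-3}. The main obstacle is precisely this bookkeeping: one must redo the local computations of \cite{lz-17a} in the presence of the non-trivial flat summand $F^{1,0}_{\ol B}$ and the cover $j_B$, verifying at each type of singular fibre and at each ramification point that the contributions of $Z$, of $\sigma$ and of the deck involution meet the claimed bounds.
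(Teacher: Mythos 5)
Your proposal runs Xiao's slope method on the Harder--Narasimhan filtration of $A^{1,0}_{\ol B}$, but the paper proves \eqref{eqn-1-3} by an entirely different mechanism: the multiplication map $\varrho:S^2\big(\bar f_*\omega_{\ol S/\ol B}\big)\to\bar f_*\big(\omega_{\ol S/\ol B}^{\otimes 2}\big)$, which is generically surjective by Max Noether's theorem because $\deg\wt\Phi=1$ forces the fibres to be non-hyperelliptic. Writing $\mathcal I$ for its image, relative Noether converts a lower bound $\deg\mathcal I\geq\frac{9r+3g-12}{2r}\deg\bar f_*\omega_{\ol S/\ol B}$ into the bulk term of \eqref{eqn-1-3}, and that lower bound comes from rank estimates for the images of $S^2A^{1,0}_{\ol B}$ (Clifford plus theorem: rank $\geq 3r-3$) and of $A^{1,0}_{\ol B}\otimes\bar f_*\omega_{\ol S/\ol B}$ (Castelnuovo's bound: rank $\geq\frac{3g+3r-6}{2}$), fed into \autoref{prop-1-1}. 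The local terms over $\Lambda$ and $\Delta_{ct}$ are the degree of the torsion cokernel $\cals$ of $\varrho$, not contributions of a fixed part $Z$ or of blow-ups.

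The difference is not merely stylistic: your bulk step has a genuine gap. Xiao's covering inequality applied to $A^{1,0}_{\ol B}$, even with Clifford's bound $d_i\geq 2(\rank A_i-1)$ on every moving part and $d_{k+1}=2g-2$, yields in the worst (semistable) case only
\begin{equation*}
\omega_{\ol S/\ol B}^2\;\geq\;\big(2(r-1)+(2g-2)\big)\cdot\frac{\deg\bar f_*\omega_{\ol S/\ol B}}{r}\;=\;\frac{4r+4g-8}{2r}\,\deg\bar f_*\omega_{\ol S/\ol B},
\end{equation*}
and $\frac{4r+4g-8}{2r}<\frac{7r+3g-12}{2r}$ exactly when $3r>g+4$ --- which always holds in the regime $r>\frac{5g+22}{7}$ where the theorem is applied. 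So no amount of ``optimising the Xiao sum'' against the fixed value of $\deg\bar f_*\omega_{\ol S/\ol B}$ can reach the stated constant; the extra strength (roughly slope $5$ rather than slope $4$ when $r$ is close to $g$) is precisely what the non-hyperelliptic multiplication-map input buys, and that input is absent from your argument. If you want to salvage your route you would have to import the quadric/multiplication-map estimates anyway, at which point you are reproducing the paper's proof.
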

\begin{proof}
	The proof is quit similar to \cite[Theorem\,5.2]{lz-17a}.
	It is based on analyzing the following  natural multiplication
	\begin{equation}\label{multiplication}
	\varrho:\,S^2\left(\bar f_*\omega_{\ol S/\ol B}\right) \lra \bar f_*\big(\omega_{\ol S/\ol B}^{\otimes2}\big),
	\end{equation}
	where $S^2\left(\bar f_*\omega_{\ol S/\ol B}\right)$ is the symmetric power of $\bar f_*\omega_{\ol S/\ol B}$.
	
	As $\deg \wt\Phi=1$,  $\bar f$ is non-hyperelliptic.
	Hence the morphism $\varrho$  in \eqref{multiplication} is generically surjective by Noether's theorem (cf. \cite[\S\,III.2]{acgh-85}).
	Let $\mathcal{I}$ be the image of $\varrho$. Then one gets an exact sequence as below:
	\begin{equation*} 
	0 \lra \mathcal{I} \lra
	\bar f_*\big(\omega_{\ol S/\ol B}^{\otimes2}\big) \lra \cals \lra 0,
	\end{equation*}
	where $\cals$ are the cokernel of $\varrho$,  which is a torsion sheaf.
	So
	\begin{equation*}
	\deg \bar f_*\big(\omega_{\ol S/\ol B}^{\otimes2}\big)=\deg \mathcal{I} +\deg \cals.
	\end{equation*}
	Hence it suffices to prove
	\begin{equation}\label{eqn-1-4}
	\deg(\mathcal{I}) \geq \frac{9r+3g-12}{2r}\deg \bar f_*\omega_{\ol S/\ol B}.
	\end{equation}
	
	Let
	$$\varrho_1:~S^2A^{1,0}_{\ol B} \hookrightarrow S^2\left(\bar f_*\omega_{\ol S/\ol B}\right) \lra \mathcal{I},$$
	and
	$$\varrho_2:~A^{1,0}_{\ol B}\otimes \bar f_*\omega_{\ol S/\ol B} \lra S^2\left(\bar f_*\omega_{\ol S/\ol B}\right) \lra \mathcal{I},$$
	be the induced maps.
	Denote by $\tilde \mu_1=\frac{2\deg \bar f_*\omega_{\ol S/\ol B}}{r}$
	and $\tilde \mu_2=\frac{\deg \bar f_*\omega_{\ol S/\ol B}}{r}$.
	Then
	$\mu_f({\rm Im}(\varrho_1))\geq \tilde \mu_1$ and $\mu_f({\rm Im}(\varrho_2))\geq \tilde \mu_2$,
	where
	$$\mu_f(\mathcal E)=\max\{\deg \mathcal F~|~\mathcal E \otimes \mathcal F^{\vee} \text{~is semi-positive}\},~
	\forall \text{~locally free sheaf $\mathcal{E}$}.$$
	Since the map $\Phi_0$, as well as $\wt \Phi$, is birational, one has
	$$\left\{\begin{aligned}
	\rank({\rm Im}(\varrho_1)) &\,\geq 3r-3, \qquad \text{by the Clifford plus theorem, cf. \cite[\S\,III.2]{acgh-85}};\\[3mm]
	\rank({\rm Im}(\varrho_2)) &\,\geq g+\deg(M|_{\ol F})+r-1-s, \quad\text{by \cite[Lemma\,3.10]{lz-15b}},\\[1mm]
	&\,\geq g+\frac{g+3s-4}{2}+r-1-s, ~ \text{~by Castelnuovo's bound, cf. \cite[\S\,III.2]{acgh-85}},\\[1mm]
	&\,\geq \frac{3g+3r-6}{2},\qquad \text{where~$s:=h^0\big(\ol F,M|_{\ol F}\big)\geq r$}.
	\end{aligned}\right.$$
	Hence \eqref{eqn-1-4} follows from the next proposition.
\end{proof}

The next proposition was stated for $f_*\big(\omega_{X/B}^{\otimes 2}\big)$ in \cite[Proposition\,2.5]{lz-15a}.
But we note that the proof is still valid if we replace $f_*\big(\omega_{X/B}^{\otimes 2}\big)$ by the image $\mathcal{I}$.
\begin{proposition}\label{prop-1-1}
	Let $\tilde \mu_1>\cdots >\tilde \mu_k\geq 0$ {\rm(}resp. $0<\tilde r_1<\cdots <\tilde r_k\leq 3g-3${\rm)} be any decreasing {\rm(}resp. increasing{\rm)} sequence of rational {\rm(}resp. integer{\rm)} numbers.
	Assume that there exists a subsheaf $\mathcal F_i \subseteq \mathcal{I}$ such that $\mu_f(\mathcal F_i)\geq \tilde \mu_i$ and $\rank\mathcal F_i \geq \tilde r_i$ for each $i$.
	Then
	\begin{equation*}
	\deg (\mathcal{I}) \geq \sum_{i=1}^k\tilde r_i(\tilde \mu_i-\tilde \mu_{i+1}), \qquad\text{where~}\tilde\mu_{k+1}=0.
	\end{equation*}
\end{proposition}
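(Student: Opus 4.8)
The plan is to prove the inequality by induction on $k$, peeling off the sheaf $\mathcal F_1$ of largest slope first. The base case $k=1$ is simply the assertion that $\deg(\mathcal I)\ge \deg(\mathcal F_1)\ge \tilde r_1\tilde\mu_1$, which follows because $\mathcal F_1\subseteq\mathcal I$ is a subsheaf of the same rank or smaller and of the stated slope: more precisely, $\mu_f(\mathcal F_1)\ge\tilde\mu_1$ means $\mathcal F_1\otimes\mathcal A^\vee$ is semi-positive for a line bundle $\mathcal A$ of degree $\lceil\tilde\mu_1\rceil$ (or one argues directly with $\mathbb Q$-divisors), so $\deg\mathcal F_1\ge \rank(\mathcal F_1)\cdot\tilde\mu_1\ge \tilde r_1\tilde\mu_1$, and $\deg\mathcal I\ge\deg\mathcal F_1$ since the quotient $\mathcal I/\mathcal F_1$ has degree $\ge 0$ — here one uses that all these sheaves are nef/semi-positive because $\mathcal I$ is a quotient of a symmetric power of the nef bundle $\bar f_*\omega_{\ol S/\ol B}$.

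For the inductive step I would form the saturation or the quotient $\mathcal I':=\mathcal I/\mathcal F_1$ (replacing $\mathcal F_1$ by its saturation in $\mathcal I$ so that $\mathcal I'$ is locally free, which only increases $\deg\mathcal F_1$ and does not decrease $\mu_f$), and consider the images $\mathcal F_i':=\mathrm{image}(\mathcal F_i\to\mathcal I')$ for $i=2,\dots,k$. One checks that $\mu_f(\mathcal F_i')\ge\tilde\mu_i$ (a quotient of a semi-positive-slope sheaf by a subsheaf of larger slope retains at least that slope after twisting) and that $\rank\mathcal F_i'\ge \tilde r_i-\tilde r_1$; the latter is the crucial bookkeeping step, since $\mathcal F_i\cap\mathcal F_1$ has rank at most $\rank\mathcal F_1=\tilde r_1$ wait — more carefully, $\rank\mathcal F_i'=\rank\mathcal F_i-\rank(\mathcal F_i\cap\mathrm{(saturation\ of\ }\mathcal F_1))\ge\tilde r_i-\tilde r_1$. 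Applying the inductive hypothesis to $\mathcal I'$ with the data $\tilde\mu_2>\cdots>\tilde\mu_k$ and $\tilde r_2-\tilde r_1<\cdots<\tilde r_k-\tilde r_1$ gives $\deg\mathcal I'\ge\sum_{i=2}^k(\tilde r_i-\tilde r_1)(\tilde\mu_i-\tilde\mu_{i+1})$; then
\[
\deg\mathcal I=\deg\mathcal F_1+\deg\mathcal I'\ge \tilde r_1\tilde\mu_1+\sum_{i=2}^k(\tilde r_i-\tilde r_1)(\tilde\mu_i-\tilde\mu_{i+1}),
\]
and an Abel-summation rearrangement, using $\tilde r_1\tilde\mu_1=\tilde r_1(\tilde\mu_1-\tilde\mu_2)+\tilde r_1\sum_{i=2}^k(\tilde\mu_i-\tilde\mu_{i+1})$ (valid since $\tilde\mu_{k+1}=0$), recovers exactly $\sum_{i=1}^k\tilde r_i(\tilde\mu_i-\tilde\mu_{i+1})$.

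The main obstacle is making rigorous the behaviour of $\mu_f$ (the maximal destabilizing slope with respect to semi-positivity, not the ordinary slope) under the operations of taking subsheaves, saturations, and quotients on a curve base. Ordinary slope-additivity in exact sequences is immediate, but the quantity $\mu_f(\mathcal E)=\max\{\deg\mathcal F\mid \mathcal E\otimes\mathcal F^\vee\ \text{semi-positive}\}$ requires a short argument that (i) it is additive/monotone enough for the estimates above, and (ii) passing to $\mathcal I/\mathcal F_1$ does not destroy semi-positivity — this is where the nefness inherited from $\bar f_*\omega_{\ol S/\ol B}$, hence from Fujita's semi-positivity theorem, is used. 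Since the proposition is quoted verbatim from \cite[Proposition\,2.5]{lz-15a} with $\mathcal I$ in place of $f_*(\omega_{X/B}^{\otimes2})$, I would simply check that the original argument there uses only formal properties of $\mathcal I$ as a nef quotient of $S^2(\bar f_*\omega_{\ol S/\ol B})$, which it does, and cite it accordingly rather than reproduce the induction in full.
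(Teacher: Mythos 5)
Your plan is essentially sound, but note first that the paper itself gives no argument here at all: it simply cites \cite[Proposition~2.5]{lz-15a} and remarks that the proof there carries over verbatim, the point being that the argument (the standard Xiao-type one, which compares the ranks $\tilde r_i$ with the Harder--Narasimhan filtration of the nef sheaf, i.e.\ integrates the step function $\mu\mapsto\max\{\rank\mathcal F\;:\;\mathcal F\subseteq\mathcal I,\ \mu_f(\mathcal F)\geq\mu\}$ against $d\mu$) uses only the fact that $\mathcal I$ is a nef quotient of $S^2(\bar f_*\omega_{\ol S/\ol B})$. Your induction is a legitimate alternative route --- it is in effect the Harder--Narasimhan argument unrolled one step at a time --- and your closing remark that one should verify the formal hypotheses and cite \cite{lz-15a} is exactly what the authors do. Two points in your inductive step need tightening. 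First, the statement you induct on must be formulated for an arbitrary nef locally free sheaf on $\ol B$, not for $\mathcal I$ specifically, since $\mathcal I/\mathcal F_1^{\mathrm{sat}}$ is no longer the image of a multiplication map; this costs nothing, because nefness passes to locally free quotients (and on a curve the relevant images and quotients are automatically locally free once one saturates). Second, you are only given $\rank\mathcal F_1\geq\tilde r_1$, not equality, so the correct bound for the images is $\rank\mathcal F_i'\geq\tilde r_i-\rho$ with $\rho:=\rank\mathcal F_1$, not $\tilde r_i-\tilde r_1$; the Abel summation still closes because the resulting surplus is $(\rho-\tilde r_1)(\tilde\mu_1-\tilde\mu_2)\geq 0$, and any index with $\tilde r_i\leq\rho$ contributes a nonpositive term to the target sum and may simply be dropped from the inductive data. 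With those repairs your argument is complete and is a perfectly acceptable self-contained substitute for the citation.
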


\subsection{The case when $\deg(\wt \Phi)=2$}
In this case, we have to consider the slope of semi-stable double cover fibrations.
We first recall some facts about the double cover fibrations from \cite{lz-15b}.

One starts from a relatively minimal fibration $\tilde h:\,\wt Y\to \ol B$ of genus $\gamma>0$
and a reduced divisor $\wt R\in \Pic(\wt Y)$
with $\wt R\cdot \wt \Gamma=2g+2-4\gamma$ and $\mathcal O_{\wt Y}(\wt R) \equiv \wt L^{\otimes 2}$ for some line bundle $\wt L$,
where $\wt \Gamma$ is a general fiber of $\tilde h$.
From these data one constructs a double cover $\pi_0:\,S_0\to Y_0=\wt Y$.
By the canonical resolution, one gets a smooth fibred surface $\tilde f:\,\wt S \to \ol B$,
and by contracting further $(-1)$-curves contained in the fibers one obtains a relatively minimal
fibration $\bar f:\,\ol X \to \ol B$ of genus $g$.
We call $\bar f$ a double cover fibration of type $(g,\gamma)$.
\begin{figure}[H]
	$$\mbox{}
	\xymatrix{
		\wt S\ar@{=}[r] & S_{t} \ar[r]^-{\phi_t}\ar[d]_-{\tilde \pi=\pi_t}&
		S_{t-1}\ar[r]^-{\phi_{t-1}}\ar[d]^-{\pi_{t-1}}&
		\cdots \ar[r]^-{\phi_2} & S_1\ar[r]^-{\phi_1}\ar[d]_-{\pi_1}
		& S_0 \ar[d]^-{\pi_0}\\
		& Y_{t} \ar[r]^-{\psi_t}& Y_{t-1}\ar[r]^-{\psi_{t-1}}&
		\cdots \ar[r]^-{\psi_2} & Y_1\ar[r]^-{\psi_1} & Y_0 \ar@{=}[r] & \wt Y}
	$$
	\caption{Canonical resolution.}  \label{figure-1}
\end{figure}
\noindent Here $\psi_i$'s are successive blowing-ups
resolving the singularities of $\wt R$, and
$\pi_{i}:\,S_i \to Y_i$ is the double cover determined by
$\mathcal O_{Y_i}(R_i) \equiv L_i^{\otimes 2}$ with
$$R_i=\psi_i^*(R_{i-1})-2[m_{i-1}/2]\, \mathcal E_i,\qquad
L_i=\psi_i^*(L_{i-1})\otimes \mathcal O_{Y_i}\left(\mathcal E_i^{-[m_{i-1}/2]}\right),$$
where $\mathcal E_i$ is the exceptional divisor of $\psi_i$,
$m_{i-1}$ is the multiplicity of the singular point $y_{i-1}$ in $R_{i-1}$ (also called the multiplicity of the blowing-up $\psi_i$),
$[~]$ stands for the integral part,
$R_0=\wt R$ and $L_0=\wt L$.
A singularity $y_j \in R_{j}\subseteq Y_{j}$ is said to be {\it infinitely near to}
$y_{i}\in R_{i}\subseteq Y_{i}$ ($j>i$), if $\psi_{i+1}\circ\cdots\circ\psi_j(y_j)=y_{i}\,.$

We remark that the order of these blowing-ups contained in $\psi=\psi_1\circ\cdots\circ\psi_t$ is not unique.
If $y_{i-1}$ is a singular point of $R_{i-1}$ of odd multiplicity $2k+1$ ($k\geq 1$)
and there is a unique singular point $y$ of $R_i$
on the exceptional curve $\mathcal E_i$ of multiplicity $2k+2$,
then we always assume that $\psi_{i+1}: Y_{i+1} \to Y_{i}$ is a blowing-up at $y_i=y$.
We call such a pair $(y_{i-1},y_{i})$ a {\it singularity of $R$ of type $(2k+1 \to 2k+1)$},
and $y_{i-1}$ (resp. $y_i$) the first (resp. second) component.
\begin{definition}[{\cite[Definition\,4.1]{lz-15b}}]\label{definitionofs_i}
	For any singular fiber $F$ of $f$ and $j\geq 2$, we define
	\begin{list}{}
		{\setlength{\labelwidth}{0.3cm}
			\setlength{\leftmargin}{0.4cm}}
		\item[$\bullet$] if $j$ is odd, $s_j(F)$ equals the number of $(j\to j)$
		type singularities of $R$ over the image $f(F)$;
		\item[$\bullet$] if $j$ is even, $s_j(F)$ equals the number of singularities of multiplicity $j$ or $j+1$ of $R$ over the image $f(F)$,
		neither belonging to the second component of type $(j-1 \to j-1)$ singularities nor to the first component of type $(j+1 \to j+1)$ singularities.
	\end{list}
	Let $h_i:\,Y_i\to \ol B$ be the induced fibration,
	$\omega_{h_i}=\omega_{Y_i}\otimes h_i^*\omega_{\ol B}^{-1}$ and $R_t'= R_t \setminus V_t$,
	where $V_t$ is the union of vertical isolated $(-2)$-curves in $R_t$.
	Here a curve $C\subseteq R_t$ is called to be {\it isolated} in $R_t$,
	if there is no other curve $C'\subseteq R_t$ such that $C\cap C' \neq \emptyset$. We define
	$$\begin{aligned}
	s_2&:=\left(\omega_{h_t}+R_t'\right)\cdot R_t'+2\sum_{F \text{~is singular}} s_2(F),\\
	s_j&:=\sum_{F \text{~is singular}} s_j(F),\qquad\qquad \forall~ j\geq 3.
	\end{aligned}$$
	Note that the contraction $\psi$ is unique since $\gamma>0$ (although the order of these blowing-ups contained in $\psi$ is not unique).
	Hence the invariants $s_j$'s are well-defined.
\end{definition}

\begin{theorem}[{\cite[Theorem\,4.3]{lz-15b}}]\label{thminvariants-double-fibration}
	Let $\bar f$ be a double cover fibration of type $(g, \gamma)$. Then
	$$\begin{aligned}
	(2g+1-3\gamma)\omega_{\ol S/\ol B}^2~=&~\,x\cdot\frac{\omega_{\wt Y/\ol B}^2}{\gamma-1}+yT+zs_2+\sum_{k\geq 1} a_ks_{2k+1}+\sum_{k\geq 2}b_ks_{2k},\\
	(2g+1-3\gamma)\deg \bar f_*\omega_{\ol S/\ol B}~=&~\,
	\bar x\cdot\frac{\omega_{\wt Y/\ol B}^2}{\gamma-1}+2(2g+1-3\gamma)\deg \tilde h_*\omega_{\wt Y/\ol B}+\bar yT\\
	&\,+\bar zs_2-\frac{2g+1-3\gamma}{4}\cdot n_2 +\sum_{k\geq 1} \bar a_ks_{2k+1}+\sum_{k\geq 2}\bar b_ks_{2k},
	\end{aligned}$$
	where we set $\frac{\omega_{\wt Y/\ol B}^2}{\gamma-1}=0$ if $\gamma=1$,~
	$n_2$ the number of vertical isolated $(-2)$-curves of $\wt R$,
	and
	$$\begin{aligned}
	&x=\frac{(3g+1-4\gamma)(g-1)}{2},&\,\quad\quad\quad\,&~ y=\frac{3}{2},&\qquad&~z=g-1;\qquad\quad\\
	&\bar x=\frac{(g+1-2\gamma)^2}{8},&~&~ \bar y=\frac{1}{8},&&~\bar z=\frac{g-\gamma}{4}.
	\end{aligned}$$\vspace{-0.2cm}
	$$\,\begin{aligned}
	&a_k\,=\,12\bar a_k-(2g+1-3\gamma),
	&&b_k\,=\,12\bar b_k-2(2g+1-3\gamma),\\
	&\bar a_k\,=\,k\big(g-1+(k-1)(\gamma-1)\big),
	&\quad&\bar b_k\,=\,\frac{k\big(g-1+(k-2)(\gamma-1)\big)}{2},~
	\end{aligned}~$$\vspace{-0.2cm}
	$$\,~T=-\frac{\big((g+1-2\gamma)\omega_{\wt Y/\ol B}-(\gamma-1)R\big)^2}{\gamma-1}-2(\gamma-1)n_2\geq 0.\qquad\qquad\qquad
	$$
\end{theorem}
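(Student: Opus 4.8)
The plan is to express the Chern numbers of the surfaces in the tower of Figure~\ref{figure-1} through the standard double-cover and blowing-up formulas, pass to the relatively minimal model, and collect the singularity contributions; the first stated identity will be the relativised computation of $K^2$ and the second that of $\chi(\Ocal)$. For $\pi_0\colon S_0\to Y_0=\wt Y$ one has $K_{S_0}=\pi_0^*(K_{\wt Y}+\wt L)$, $K_{S_0}^2=2(K_{\wt Y}+\wt L)^2$ and $\chi(\Ocal_{S_0})=2\chi(\Ocal_{\wt Y})+\tfrac12\,\wt L\cdot(K_{\wt Y}+\wt L)$. Each blowing-up $\psi_i$ at a point of multiplicity $m_{i-1}$ of $R_{i-1}$ changes these quantities by amounts depending only on $k_{i-1}:=[m_{i-1}/2]$: since $K_{Y_i}+L_i=\psi_i^*(K_{Y_{i-1}}+L_{i-1})+(1-k_{i-1})\mathcal E_i$, the square $(K_{Y_i}+L_i)^2$ drops by $(k_{i-1}-1)^2$ and $\chi(\Ocal_{S_i})$ drops by $\tfrac12 k_{i-1}(k_{i-1}-1)$. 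Summing along $\psi=\psi_1\circ\cdots\circ\psi_t$ yields closed expressions for $K_{\wt S}^2$ and $\chi(\Ocal_{\wt S})$ in terms of $(K_{\wt Y}+\wt L)^2$, $\wt L\cdot(K_{\wt Y}+\wt L)$, the invariants of $\wt Y$, and a sum of local terms indexed by the singular points of $\wt R$ and their infinitely near points, the vertical isolated $(-2)$-curves of $\wt R$ being handled separately (they produce $n_2$).

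\textbf{Step 2 (passage to relative invariants).} The $(-1)$-curves in the fibres of $\tilde f\colon\wt S\to\ol B$ are contracted to reach $\bar f\colon\ol S\to\ol B$, each such curve being the strict transform of an exceptional divisor of some $\psi_i$ over a singular point of $\wt R$, so that the associated corrections (raising $K^2$ by one per curve, leaving $\chi(\Ocal)$ unchanged) are again recorded by the resolution data. Using $\omega_{\ol S/\ol B}^2=K_{\ol S}^2-8(g-1)(b-1)$ and $\deg\bar f_*\omega_{\ol S/\ol B}=\chi(\Ocal_{\ol S})-(g-1)(b-1)$ with $b=g(\ol B)$, together with the analogous relations for $\tilde h\colon\wt Y\to\ol B$, the absolute formulas of Step 1 become identities relating $\omega_{\ol S/\ol B}^2$, $\deg\bar f_*\omega_{\ol S/\ol B}$, $\omega_{\wt Y/\ol B}^2$, $\deg\tilde h_*\omega_{\wt Y/\ol B}$ and the local terms; Noether's equality $12\chi=K^2+e$ on $\wt S$ links the $K^2$-identity to the $\chi$-identity and is the source of the factor $12$ relating $a_k,b_k$ to $\bar a_k,\bar b_k$.

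\textbf{Step 3 (eliminating $\wt L$ and the index term $T$).} Restricting $\Ocal_{\wt Y}(\wt R)\equiv\wt L^{\otimes2}$ to a general fibre $\wt\Gamma$ gives $\wt L\cdot\wt\Gamma=g+1-2\gamma$, and together with $\omega_{\wt Y/\ol B}\cdot\wt\Gamma=2\gamma-2$ this lets one re-express $(K_{\wt Y}+\wt L)^2$ and $\wt L\cdot(K_{\wt Y}+\wt L)$ through $\omega_{\wt Y/\ol B}^2$, $\deg\tilde h_*\omega_{\wt Y/\ol B}$ and the self-intersection of the $\Qbb$-divisor $(g+1-2\gamma)\omega_{\wt Y/\ol B}-(\gamma-1)\wt R$, which has degree $0$ on every fibre of $\tilde h$. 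The algebraic index theorem forces that self-intersection to be $\leq 0$, which is precisely the content of $T\geq 0$; clearing the denominator $\gamma-1$ (with the convention $\omega_{\wt Y/\ol B}^2/(\gamma-1)=0$ when $\gamma=1$) introduces the positive common factor $2g+1-3\gamma$ on both sides, and separating off the vertical isolated $(-2)$-curves of $\wt R$ produces the $n_2$-terms, including the $-2(\gamma-1)n_2$ inside $T$.

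\textbf{Main obstacle.} The substantive part is the local combinatorics that turns the unstructured sum of Step~1 into the invariants $s_2$, $s_{2k+1}$, $s_{2k}$ with exactly the stated coefficients. Because the order of the blowing-ups inside $\psi$ is not unique, and because the several singularity configurations---ordinary points of even or odd multiplicity, the linked $(2k+1\to 2k+1)$ pairs, and vertical isolated $(-2)$-curves---each contribute through a slightly different local formula, one must check case by case that after the prescribed grouping all order-dependence cancels, so that the $s_j$ are well defined and the coefficients $x,y,z,\bar x,\bar y,\bar z,a_k,b_k,\bar a_k,\bar b_k$ come out precisely as asserted. Everything outside this bookkeeping is a bounded computation with the standard double-cover and blow-up formulas.
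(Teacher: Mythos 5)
This theorem is quoted verbatim from \cite[Theorem~4.3]{lz-15b}; the present paper gives no proof of it, so there is no internal argument to measure yours against. Your outline does follow the route taken in that reference: the Horikawa double-cover formulas for $K^2$ and $\chi(\Ocal)$, their variation under the blow-ups of the canonical resolution (your formulas $K_{Y_i}+L_i=\psi_i^*(K_{Y_{i-1}}+L_{i-1})+(1-k_{i-1})\mathcal{E}_i$, the drops $(k_{i-1}-1)^2$ and $\tfrac12 k_{i-1}(k_{i-1}-1)$, and $\wt L\cdot\wt\Gamma=g+1-2\gamma$ are all correct), the passage to relative invariants, Noether's formula as the source of the factor $12$ in $a_k=12\bar a_k-(2g+1-3\gamma)$, and the Hodge index theorem applied to the fibrewise-degree-zero $\Qbb$-divisor $(g+1-2\gamma)\omega_{\wt Y/\ol B}-(\gamma-1)\wt R$ to obtain $T\geq 0$.

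Nevertheless, as a proof of this particular statement your proposal has a genuine gap, and you have named it yourself: the entire content of the theorem is the precise coefficients $x,y,z,\bar x,\bar y,\bar z$ and $a_k,b_k,\bar a_k,\bar b_k$ attached to the invariants $s_2$, $s_{2k}$, $s_{2k+1}$, and you do not carry out the computation that produces them. Concretely, two things are left undone. First, you must show that after grouping the infinitely near points into ordinary points of multiplicity $2k$ or $2k+1$, linked $(2k+1\to 2k+1)$ pairs, and vertical isolated $(-2)$-curves, the total local contribution is independent of the admissible orderings of the blow-ups, so that the $s_j$ are well defined. Second, you must verify that each such configuration contributes exactly $\bar a_k$ (resp.\ $\bar b_k$) to $(2g+1-3\gamma)\deg\bar f_*\omega_{\ol S/\ol B}$ and the corresponding $a_k$ (resp.\ $b_k$) to $(2g+1-3\gamma)\omega_{\ol S/\ol B}^2$; this requires redistributing $(K_{\wt Y}+\wt L)^2$ and $\wt L\cdot(K_{\wt Y}+\wt L)$ among the terms $\omega_{\wt Y/\ol B}^2/(\gamma-1)$, $T$, $s_2$ and $n_2$ after clearing the denominator $\gamma-1$, and then checking the Noether-type consistency between the two identities (each ordinary even-multiplicity point accounting for two nodes of the fibre, each $(2k+1\to2k+1)$ pair for one). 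Until this bookkeeping is executed, the stated formulas are plausible but unverified; everything else in your sketch is standard and correct.
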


In the case when the fibration $\tilde h$ is locally trivial, it is clear that
\begin{equation}\label{eqn-1-7}
n_2=\omega_{\wt Y/\ol B}^2=\deg \tilde h_*\omega_{\wt Y/\ol B}=0.
\end{equation}
Moreover, similar to \cite{liu-16}, one proves that
\begin{lemma}
	Let $\bar f:\,\ol S \to \ol B$ be a double cover fibration as above.
	If $\tilde h$ is locally trivial and $\bar f$ is semi-stable, then
	\begin{equation}\label{eqn-1-9}
	\delta_0=s_2+\sum_{k\geq 2} 2s_{2k},\qquad \delta_i=s_{2i+1}+s_{2(g-i)+1},\text{~if $i>0$}.
	\end{equation}
	Here $\delta_i$ is the number of nodes of type $i$ contained in the singular fibers of $\bar f$,
	and a node $p$ in a singular fiber $\ol F$ of $\bar f$ is called of type $0$ (resp. $i$ with $0<i\leq g/2$)
	if the partial normalization of $\ol F$ at $p$ is connected
	(resp. consists of two connected components of arithmetic genera $i$ and $g-i$).
\end{lemma}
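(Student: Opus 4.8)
The statement describes a double cover fibration $\bar f\colon\ol S\to\ol B$ of type $(g,\gamma)$ built from a \emph{locally trivial} base fibration $\tilde h$, under the additional hypothesis that $\bar f$ is semi-stable. The goal is to identify the nodal invariants $\delta_i$ of the singular fibers of $\bar f$ with the combinatorial invariants $s_j$ attached to the branch divisor $\wt R$ of the double cover. The plan is to follow the local analysis of the double cover construction from \cite[\S4]{lz-15b} (mirroring what is done in \cite{liu-16}), but simplified drastically by the hypothesis $\eqref{eqn-1-7}$, namely $n_2=\omega_{\wt Y/\ol B}^2=\deg\tilde h_*\omega_{\wt Y/\ol B}=0$, which forces $\tilde h$ to have smooth fibers everywhere and pins down exactly which vertical curves can appear in $\wt R$.

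First I would reduce to a fiber-by-fiber statement: since $\tilde h$ is smooth, every singular fiber $\ol F$ of $\bar f$ lies over a point $b\in\ol B$ where the fiber $\wt\Gamma_b$ of $\tilde h$ is a smooth curve of genus $\gamma$, and the singularities of $\ol F$ come entirely from the interaction of $\wt R$ with $\wt\Gamma_b$, resolved by the canonical resolution of Figure~\ref{figure-1}. So it suffices to analyze a single such fiber. Next, for each type of singular point $y\in\wt R$ lying on $\wt\Gamma_b$ (of multiplicity $m$, possibly infinitely near, possibly of type $(2k+1\to 2k+1)$), I would track through the canonical resolution what the double cover $\pi_t$ produces over the corresponding part of the fiber, and then contract the $(-1)$-curves to pass to $\bar f$. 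The semi-stability hypothesis is crucial here: it restricts $\wt R$ to have at worst the kinds of singularities whose resolution-plus-contraction yields only nodes (no worse singularities, no non-reduced fiber components), and it is precisely this constraint that matches odd multiplicities $2k+1$ to the "separating-type" nodes $\delta_i$ with $i>0$ and even multiplicities (and the $s_2$-type data measuring the self-intersection and tangency behavior of the horizontal part $R_t'$) to the "non-separating" nodes $\delta_0$. The key bookkeeping is: a $(2k+1\to 2k+1)$ singularity over $f(\ol F)$ contributes a node separating $\ol F$ into pieces of genera $i$ and $g-i$ with $i$ determined by $k$, giving $\delta_i = s_{2i+1}+s_{2(g-i)+1}$ (the two summands from the two ways such a node can arise, symmetric under $i\leftrightarrow g-i$), while the ordinary non-separating nodes are counted by $s_2$ together with $2s_{2k}$ for $k\ge 2$, the factor $2$ reflecting that a singular point of $\wt R$ of even multiplicity $2k$ on $\wt\Gamma_b$ creates, after resolution, two nodes of $\ol F$ of type $0$.

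The main obstacle I anticipate is the careful local genus computation: verifying that a type $(2k+1\to 2k+1)$ singularity of $\wt R$ on the smooth fiber $\wt\Gamma_b$ produces, after the canonical resolution and the contraction to the relatively minimal model $\bar f$, exactly one separating node whose "side genera" are $k$-related in the way that gives the index $i$ in $\delta_i=s_{2i+1}+s_{2(g-i)+1}$ — and dually that even-multiplicity points give two type-$0$ nodes. This requires chasing the formulas $R_i=\psi_i^*(R_{i-1})-2[m_{i-1}/2]\mathcal E_i$ and $L_i=\psi_i^*(L_{i-1})\otimes\mathcal O(\mathcal E_i^{-[m_{i-1}/2]})$ through each blow-up, computing how the double cover behaves over the exceptional curves, and applying adjunction on each component; since $\tilde h$ is locally trivial this is genuinely local on $\wt Y$ near $\wt\Gamma_b$ and uses no global input, but it is the combinatorial heart of the lemma. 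Once this local dictionary is established, summing over all singular fibers and all singularities of $\wt R$ yields \eqref{eqn-1-9} directly, exactly as in the analogous computation of \cite{liu-16}.
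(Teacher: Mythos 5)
Your plan matches the paper's treatment: the paper offers no proof of this lemma at all, remarking only that one proves it ``similar to [Liu16]'' via the canonical-resolution machinery of [LZ18b, \S 4], which is precisely the fiber-by-fiber local analysis you outline (smoothness of all fibers of $\tilde h$ from local triviality, the semi-stability constraint on the singularities of $\wt R$, and the dictionary matching odd-multiplicity $(2k+1\to 2k+1)$ singularities to separating nodes and the $s_2$, $s_{2k}$ data to non-separating ones). Your outline is, if anything, more explicit than the paper's, and correctly flags the local genus bookkeeping as the substantive step.
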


\begin{proposition}
	Let $\bar f:\,\ol S \to \ol B$ be a double cover fibration as above.
	Assume that $\tilde h$ is locally trivial and $\bar f$ is semi-stable.
	\begin{enumerate}
		\item Let $\delta_h=\sum\limits_{i\geq 2}\delta_i$. Then it holds
		\begin{equation}\label{eqn-1-8}
		\omega_{\ol S/\ol B}^2\geq \frac{4(g-1)}{g-\gamma}\deg\bar f _*\omega_{\ol S/\ol B}+3\delta_1+7\delta_h.
		\end{equation}
		\item If $\delta_0=0$, then
		\begin{equation}\label{eqn-1-10}
		\omega_{\ol S/\ol B}^2\geq \frac{3(2g+3\gamma-5)}{g-1}\deg\bar f _*\omega_{\ol S/\ol B}+2\delta_1+5\delta_h.
		\end{equation}
		\item If $\gamma<q_{\bar f}$, then
		\begin{equation}\label{eqn-1-11}
		\omega_{\ol S/\ol B}^2\geq \frac13\Big(\frac{8(g-1)}{g-\gamma}+\frac{6g+4\gamma-10}{g-1}\Big)\deg\bar f _*\omega_{\ol S/\ol B}+2\delta_1+\frac{14}{3}\delta_h.
		\end{equation}
	\end{enumerate}
\end{proposition}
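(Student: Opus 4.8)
We specialize \autoref{thminvariants-double-fibration} to the locally trivial situation and then read off each inequality by comparing coefficients in the double-cover invariants $s_j$.

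By \eqref{eqn-1-7} we have $\omega_{\wt Y/\ol B}^2=\deg\tilde h_*\omega_{\wt Y/\ol B}=n_2=0$ (with the convention of \autoref{thminvariants-double-fibration} when $\gamma=1$), and moreover $T=0$: the class $(g+1-2\gamma)\omega_{\wt Y/\ol B}-(\gamma-1)R$ has degree $(g+1-2\gamma)(2\gamma-2)-(\gamma-1)(2g+2-4\gamma)=0$ on a general fibre $\wt\Gamma$ of $\tilde h$ (recall $R\cdot\wt\Gamma=2g+2-4\gamma$), so, $\tilde h$ being locally trivial, this class is numerically a multiple of a fibre and its self-intersection vanishes. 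Hence \autoref{thminvariants-double-fibration} degenerates to
\begin{align*}
(2g+1-3\gamma)\,\omega_{\ol S/\ol B}^2&=z\,s_2+\sum_{k\geq1}a_k\,s_{2k+1}+\sum_{k\geq2}b_k\,s_{2k},\\
(2g+1-3\gamma)\deg\bar f_*\omega_{\ol S/\ol B}&=\bar z\,s_2+\sum_{k\geq1}\bar a_k\,s_{2k+1}+\sum_{k\geq2}\bar b_k\,s_{2k},
\end{align*}
with all $s_j\geq0$ and $2g+1-3\gamma>0$ (the latter under the standing hypotheses, e.g.\ by \autoref{lem-1-1}). Substituting the node count \eqref{eqn-1-9} — $\delta_0=s_2+2\sum_{k\geq2}s_{2k}$, $\delta_1=s_3+s_{2g-1}$, $\delta_h=\sum_{i\geq2}(s_{2i+1}+s_{2(g-i)+1})$ — the right-hand sides of (1)--(3) become explicit non-negative combinations of the $s_j$, and each inequality amounts to a coefficient-by-coefficient comparison.

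For part (1) the slope $\frac{4(g-1)}{g-\gamma}$ is pinned down by $z=\frac{4(g-1)}{g-\gamma}\bar z$, so the $s_2$-coefficient of $\omega_{\ol S/\ol B}^2-\frac{4(g-1)}{g-\gamma}\deg\bar f_*\omega_{\ol S/\ol B}$ is exactly $0$; it then remains to check $\frac{a_k-\frac{4(g-1)}{g-\gamma}\bar a_k}{2g+1-3\gamma}\geq 3$ for $k\in\{1,g-1\}$ (the indices $s_3,s_{2g-1}$), $\geq 7$ for the remaining odd $s_{2k+1}$, and $\geq0$ for every $s_{2k}$ with $k\geq2$. Inserting $a_k=12\bar a_k-(2g+1-3\gamma)$, $b_k=12\bar b_k-2(2g+1-3\gamma)$ and using that $\bar a_k,\bar b_k$ are non-decreasing in $k$ for $\gamma\geq1$, every one of these reduces to an elementary inequality such as $\gamma\geq1$, which holds since $\tilde h$ has positive fibre genus. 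Part (2) runs the same way: $\delta_0=0$ forces $s_2=0$ and $s_{2k}=0$ for $k\geq2$, so only odd indices survive, and $\frac{3(2g+3\gamma-5)}{g-1}$ is the exact value of the slope that makes the $s_3$-coefficient equal to $2$; the other coefficient checks again bottom out at $\gamma\geq1$.

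For part (3) I would not repeat the coefficient comparison but combine \eqref{eqn-1-8} with one extra slope estimate. Since $\tilde h$ is locally trivial, $\tilde h_*\omega_{\wt Y/\ol B}$ is a flat direct summand of $\bar f_*\omega_{\ol S/\ol B}$ of rank $\gamma$; the complementary Prym summand has rank $g-\gamma$, and $\gamma<q_{\bar f}$ says exactly that this Prym summand contains a non-trivial flat subbundle. Feeding this into the relative-irregularity-improved slope inequality for double-cover fibrations (as in \cite{lz-15b}, compare also \cite{liu-16}) yields $\omega_{\ol S/\ol B}^2\geq\frac{6g+4\gamma-10}{g-1}\deg\bar f_*\omega_{\ol S/\ol B}$. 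Taking $\tfrac23$ of \eqref{eqn-1-8} plus $\tfrac13$ of this inequality gives precisely \eqref{eqn-1-11}, since $\tfrac23(3\delta_1+7\delta_h)=2\delta_1+\tfrac{14}{3}\delta_h$ and $\tfrac13\big(2\cdot\tfrac{4(g-1)}{g-\gamma}+\tfrac{6g+4\gamma-10}{g-1}\big)$ is the asserted coefficient. The main obstacle is exactly this auxiliary estimate: isolating the flat subbundle forced by $q_{\bar f}>\gamma$ and extracting the sharp constant $\frac{6g+4\gamma-10}{g-1}$ from the double-cover slope machinery; once it is available, parts (1)--(3) are pure bookkeeping resting on $\gamma\geq1$.
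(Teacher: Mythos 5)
Your proposal is correct and follows essentially the same route as the paper: parts (1) and (2) are exactly the coefficient-by-coefficient comparison that the paper compresses into ``follow directly from \autoref{thminvariants-double-fibration} together with \eqref{eqn-1-7} and \eqref{eqn-1-9}'', and part (3) is obtained, as in the paper, by combining \eqref{eqn-1-8} with the bound $\omega_{\ol S/\ol B}^2\geq \frac{6g+4\gamma-10}{g-1}\deg\bar f_*\omega_{\ol S/\ol B}$ for irregular double cover fibrations from \cite[Theorem\,4.10]{lz-15b}, in the weights $\tfrac23$ and $\tfrac13$. One caveat: your claim that $T=0$ is not justified --- a divisor class of fibre degree zero on a locally trivial fibration need not be numerically a multiple of a fibre, since the correspondence part of $NS(\wt Y)$ is negative definite rather than trivial --- but this is harmless, because $T\geq 0$ and the coefficient $y-\lambda\bar y=\tfrac32-\tfrac{\lambda}{8}$ of $T$ in $(2g+1-3\gamma)\bigl(\omega_{\ol S/\ol B}^2-\lambda\deg\bar f_*\omega_{\ol S/\ol B}\bigr)$ equals $\frac{2g-3\gamma+1}{2(g-\gamma)}$ resp. $\frac{3(2g-3\gamma+1)}{8(g-1)}$ for the two slopes $\lambda$ you use, hence is non-negative and the $T$-term may simply be discarded.
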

\begin{proof}
	The first two inequalities follow directly from \autoref{thminvariants-double-fibration}
	together with \eqref{eqn-1-7} and \eqref{eqn-1-9}.
	For the third one, one first notes that when $\gamma<q_{\bar f}$, the double cover fibration $\bar f$ is irregular,
	and hence by \cite[Theorem\,4.10]{lz-15b},
	$$\omega_{\ol S/\ol B}^2\geq \frac{6g+4\gamma-10}{g-1}\deg\bar f _*\omega_{\ol S/\ol B}.$$
	Combining this with \eqref{eqn-1-8}, one proves \eqref{eqn-1-11}.
\end{proof}

We can now finish the proof of \autoref{thm-main-1}. 
\begin{proof}[Proof of \autoref{thm-main-1} when $\deg(\wt \Phi)=2$]
	Let  $f:\,\ol S \to \ol B$ be the semi-stable family of curves of genus $g$ representing the Shimura curve
	$C\Subset \calt_g$ as above, and assume that $\deg(\wt \Phi)=2$,
	where $\wt \Phi$ is the map induced by $A_{\ol B}^{1,0}$ in \autoref{sec-1-1}.
	By \autoref{lem-1-1}, $\bar f$ is a double cover fibration and the quotient fibration $\tilde h$
	is locally trivial.
	Note that $\rank A_{\ol C}^{1,0}\leq g$.
	Hence the assumption $\rank A_{\ol C}^{1,0}>\frac{5g+22}{7}$ implies in particular that $g\geq 12$.
	Thus we may assume that $\gamma>0$ by \cite{lz-17a}, where $\gamma$ is the genus of a general fiber of $\tilde h$.
	We claim that
	\begin{claim}\label{claim-1-1}
		The family $\bar f:\, \ol S \to \ol B$ contains no hyperelliptic fiber with compact Jacobian,
		equivalently, it holds $\Lambda=\emptyset$, where $\Lambda\subseteq B$ is the ramification divisor
		of the double cover $j_B:\,B \to C$ as in \cite[\S\,3]{lz-17a}.
	\end{claim}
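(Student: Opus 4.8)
The strategy is to play the hypothetical hyperellipticity off against the double cover structure produced by $\deg\wt\Phi=2$. Suppose $\Lambda\neq\emptyset$ and fix $p_0\in\Lambda$; then the fibre $\ol F_{p_0}$ of $\bar f$ is hyperelliptic with compact Jacobian (this is the content of the equivalence in the statement, cf. \cite[\S\,3]{lz-17a}). Since $\bar f$ is a double cover fibration of type $(g,\gamma)$ with $\tilde h$ relatively minimal and locally trivial, the deck involution $\tau$ of the double cover $\wt S\to\wt Y$ (extending the involution of the general fibre from the proof of \autoref{lem-1-1}) is fibrewise non-trivial and its quotient is (birational to) $\tilde h:\,\wt Y\to\ol B$; hence $\ol F_{p_0}$ is a double cover of $\wt\Gamma_{p_0}$, the fibre of $\tilde h$ over $p_0$, which is a \emph{smooth} curve of genus $\gamma$. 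By hypothesis $\gamma>0$, and $\gamma<\frac{2g-22}{7}$ by \autoref{lem-1-1}; combined with $g\geq 12$ this gives $2\gamma+1<g$.

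First I would treat the case when $\ol F_{p_0}$ is smooth. On a smooth hyperelliptic curve of genus $g$ the canonical system is the pull-back of $|\mathcal{O}_{\bbp^1}(g-1)|$ along the hyperelliptic map $\phi:\,\ol F_{p_0}\to\bbp^1$, so the linear subsystem of $|\omega_{\ol F_{p_0}}|$ cut out by $A^{1,0}_{\ol B}$ --- which defines $\wt\Phi|_{\ol F_{p_0}}$, exactly as $\Lambda_A$ defines $\Phi_0$ in the proof of \autoref{lem-1-1} --- factors through $\phi$; since this subsystem defines a degree-$2$ map onto $\wt\Gamma_{p_0}$ and $\deg\phi=2$, we would obtain $\wt\Gamma_{p_0}\cong\bbp^1$, i.e. $\gamma=0$, a contradiction. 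Equivalently one may invoke the Castelnuovo--Severi inequality: a smooth curve carrying both a $g^1_2$ and a degree-$2$ map onto a curve of genus $\gamma$ has genus at most $2\gamma+1$, again contradicting $2\gamma+1<g$.

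It remains to exclude the case where $\ol F_{p_0}$ is a singular stable curve of compact type, and this is the step I expect to be the main obstacle, since both ``hyperelliptic'' and ``double cover'' now have to be interpreted on a nodal curve. The plan is to work with the stable limit via admissible covers: hyperellipticity exhibits $\ol F_{p_0}$ as a degree-$2$ admissible cover of a nodal rational curve, the involution $\tau|_{\ol F_{p_0}}$ exhibits it as a degree-$2$ admissible cover of the smooth positive-genus curve $\wt\Gamma_{p_0}$, and the Castelnuovo--Severi bound for admissible covers --- equivalently, the fact that the dualizing sheaf of a compact-type hyperelliptic stable curve is pulled back from a nodal rational curve, so that the subsystem argument of the previous paragraph still applies --- again forces $g\leq 2\gamma+1$, which is impossible. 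In all cases we reach a contradiction, so $\Lambda=\emptyset$.
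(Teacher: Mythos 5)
Your smooth case is exactly the paper's: two distinct degree-$2$ maps from $\ol F_0$, to a genus-$\gamma$ curve and to $\bbp^1$, give $g\leq 2\gamma+1$ by Castelnuovo--Severi, contradicting $\gamma<\frac{2g-22}{7}$. The problem is the singular case, which you yourself flag as ``the main obstacle'' and then dispatch by invoking a ``Castelnuovo--Severi bound for admissible covers.'' That is precisely the statement that needs proof here, and neither of your proposed justifications works as stated. In particular, for a compact-type stable curve the dualizing sheaf is \emph{not} pulled back from the rational nodal quotient in the way your subsystem argument requires: one has $H^0(\ol F_0,\omega_{\ol F_0})\cong\bigoplus_i H^0(C_i,\omega_{C_i})$ (the tree condition kills all residues), the canonical system contracts elliptic components and separates the branches at the nodes, so the argument ``the canonical subsystem factors through the hyperelliptic map, hence $\gamma=0$'' does not transfer to the nodal setting. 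A degenerate Castelnuovo--Severi inequality of the kind you want is plausible but is genuinely a statement about how the two admissible involutions interact on the dual tree, not a formal consequence of the smooth case.

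The paper proves it by hand. Writing $\ol F_0=\sum C_i$ (a tree of smooth curves, since the Jacobian is compact) with the two involutions $\tau$ (quotient of arithmetic genus $\gamma$, with a single positive-genus component) and $\iota$ (hyperelliptic, quotient of arithmetic genus $0$), it splits into cases according to whether some positive-genus component $C_1$ is $\tau$-invariant with $g(C_1/\langle\tau\rangle)=0$. If not, the positive-genus components are either one $\tau$-invariant double cover of the genus-$\gamma$ component (giving $g\le 2\gamma+1$ by the smooth Castelnuovo--Severi) or a $\tau$-swapped pair of genus-$\gamma$ curves (giving $g=2\gamma$). If such a $C_1$ exists, then $C_1$ carries two double covers of $\bbp^1$, so $g(C_1)=1$ and $\tau,\iota$ have no common fixed points on $C_1$; since every attaching node of $C_1$ is $\iota$-fixed (this uses \cite[Lemma\,5.7]{lz-17a}), none is $\tau$-fixed, which forces $C_1$ to be the \emph{unique} $\tau$-invariant component and pins the remaining positive-genus components down to a swapped pair of genus $\gamma$, whence $g=2\gamma+1$. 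This last step is what rules out configurations such as two genus-one $\tau$-invariant components plus a swapped genus-$\gamma$ pair, which would give $g=2\gamma+2$; your sketch has no mechanism for excluding them. To complete your proof you must either reproduce this component analysis or supply an actual proof of the Castelnuovo--Severi inequality for compact-type admissible double covers.
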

	\begin{proof}[Proof of \autoref{claim-1-1}]
		We prove by contradiction. Assume there exists a hyperelliptic fiber $\ol F_0$ with compact Jacobian.
		Let $\tau$ and $\iota$ be the two involutions on $\ol F_0$, such that
		$\ol F_0/\langle\tau\rangle$ is of arithmetic genus $\gamma$
		and $\ol F_0/\langle\iota\rangle$ is of arithmetic genus zero.
		
		First it is easy to see that $\ol F_0$ is not smooth;
		in fact, if $\ol F_0$ is smooth, it admits two different double covers to $\Gamma$ and $\bbp^1$ respectively,
		and hence $g\leq 2\gamma+1$ by the Castelnuovo-Severi inequality (cf. \cite[Exercise\,V.1.9]{hartshorne-77}), a contradiction to \eqref{eqn-1-6}.
		
		We now assume that $\ol F_0$ is singular, and let $\ol F_0=\sum C_i$.
		Since $\ol F_0$ has a compact Jacobian,
		$\ol F_0$ is a tree of smooth curves.
		We divide the proof into two cases
		according to whether there exists a component $C_i$ of positive genus
		such that $C_i$ is invariant under $\tau$ with $g\big(C_i/\langle\tau\rangle\big)=0$.
		
		If there is no component of positive genus invariant under $\tau$ with $g\big(C_i/\langle\tau\rangle\big)=0$,
		then $\ol F_0$ contains at most two components whose genera are positive
		since the quotient $\ol F_0/\langle\tau\rangle$
		contains only one component whose genus is positive (its genus is $\gamma$).
		If there is only one such a component, then again $g\leq 2\gamma+1$ by the Castelnuovo-Severi inequality,
		which gives a contradiction;
		if there are two such components, then $\tau$ exchanges them and both of them are of genus $\gamma$,
		and hence $g=2\gamma$, which again contradicts \eqref{eqn-1-6}.
		
		We assume now that there is one component, saying $C_1$,
		of positive genus invariant under $\tau$ with $g\big(C_1/\langle\tau\rangle\big)=0$.
		We first claim that $g(C_1)=1$; indeed, since the quotient $C_1/\langle\tau\rangle$ is also of genus zero,
		it follows that $C_1$ admits two different double to $\bbp^1$, which implies $g(C_1)=1$.
		As two different involutions on $C_1$, $\tau$ and $\iota$ have no common fixed points.
		According to the proof of \cite[Lemma\,5.7]{lz-17a}, every point in $\ol{(\ol F_0\setminus C_1)} \cap C_1$
		is a fixed point of $\iota$.
		It follows that there is no component except $C_1$ invariant under $\tau$.
		Thus, besides $C_1$, $\ol F_0$ consists of exactly two other components, saying $C_2$ and $C_3$, of positive genus,
		which are the pre-image of the component of positive genus in $\ol F_0/\langle\tau\rangle$.
		Therefore, $g(C_2)=g(C_3)=\gamma$, and $g=2\gamma+1$.
		It again contradicts \eqref{eqn-1-6}.	
	\end{proof}
	Coming back to the proof of \autoref{thm-main-1}.
	We consider first the case when the Shimura curve $C$ is compact,
	i.e., the family $\bar f:\, \ol S \to \ol B$ has no singular fiber with non-compact Jacobian,
	or equivalently $\delta_0=0$.
	By \eqref{eqn-1-10} together with \cite[Theorem\,4.1]{lz-17a}, one obtains that
	$$\deg\bar f _*\omega_{\ol S/\ol B}\leq \frac{2(g-1)^2}{3(2g+3\gamma-5)} \deg\Omega^1_{\ol B}
	<\frac{r}{2} \deg\Omega^1_{\ol B}.$$
	The last inequality follows from the assumption that $r=\rank A_{\ol C}^{1,0}>\frac{5g+22}{7}$.
	This is a contradiction to \cite[Corollary\,3.6]{lz-17a}, since $\Lambda=\emptyset$ by \autoref{claim-1-1}.
	
	In the rest part of the proof, we assume that $C$ is not compact.
	Hence the family $\bar f:\, \ol S \to \ol B$ admits singular fibers with non-compact Jacobians,
	and hence we may assume that the flat factor $F_{\ol C}^{1,0}$ is trivial up to a suitable base change,
	cf. \cite[\S\,4]{vz-04}.
	Under this assumption, we claim that
	\begin{claim}\label{claim-1-2}
		\begin{equation}\label{eqn-1-12}
		\gamma\geq \frac{g-r-1}{2}.
		\end{equation}
	\end{claim}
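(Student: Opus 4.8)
The plan is to translate the inequality \eqref{eqn-1-12} into a bound on the fixed part of the relative Prym variety of the double cover $\ol S\to\wt Y$, and to prove that bound by a Clifford‑type argument on the fixed curve $\Gamma$.

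First I would set up the reformulation. Since $\tilde h$ is locally trivial, after the base change that trivialises $F^{1,0}_{\ol C}$ (and, if needed, identifies $\wt Y$ with $\Gamma\times\ol B$ for a fixed smooth curve $\Gamma$ of genus $\gamma$) we have $\bar f_*\omega_{\ol S/\ol B}=A^{1,0}_{\ol B}\oplus\mathcal O_{\ol B}^{\oplus(g-r)}$, and the involution $\tau$ splits $\bar f_*\omega_{\ol S/\ol B}$ into its invariant part $\tilde h_*\omega_{\wt Y/\ol B}\cong\mathcal O_{\ol B}^{\oplus\gamma}$ (contained in the trivial summand, as $\tilde h$ is locally trivial) and its anti‑invariant part $\mathcal Q:=\tilde h_*(\omega_{\wt Y/\ol B}\otimes\wt L)$, of rank $g-\gamma$. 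By \autoref{lem-1-1} the involution acts on $A^{1,0}_{\ol B}$ by $-1$, so $A^{1,0}_{\ol B}\subseteq\mathcal Q$ and $\mathcal Q=A^{1,0}_{\ol B}\oplus\mathcal O_{\ol B}^{\oplus p_0}$ with $p_0:=g-r-\gamma\ge 0$; hence \eqref{eqn-1-12} is equivalent to $p_0\le\gamma+1$. Since $\mathcal Q$ is the $(1,0)$‑Hodge bundle of the relative Prym family of $\ol S\to\wt Y$, the trivial summand $\mathcal O_{\ol B}^{\oplus p_0}$ is the $(1,0)$‑part of a fixed abelian subvariety $P_0$ of dimension $p_0$ lying in $\mathrm{Prym}(\ol F_b/\Gamma)$ for every general fibre $\ol F_b=\bar f^{-1}(b)$. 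Writing $\pi_b\colon\ol F_b\to\Gamma$ for the double cover, branched along $2g+2-4\gamma$ points with $\mathcal O_\Gamma(\mathrm{branch})\cong L_b^{\otimes 2}$ and $\deg L_b=g+1-2\gamma$, this means the space of anti‑invariant holomorphic $1$‑forms, namely $H^0(\Gamma,\omega_\Gamma\otimes L_b)$ (of dimension $g-\gamma$, since $\deg(\omega_\Gamma\otimes L_b)=g-1>2\gamma-2$ by \eqref{eqn-1-6}), contains a subspace $U_b$ of dimension $p_0$ which forms a flat, hence constant, subbundle of $\mathcal Q$ as $b$ varies.

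The main step is to bound $p_0$ via the linear series cut out by $U_b$ on $\Gamma$. After removing its base locus $Z_b$ one obtains a base‑point‑free linear system on $\Gamma$ of dimension $p_0-1$ and degree $g-1-\deg Z_b$. If this system is special, Clifford's inequality gives $p_0-1\le\tfrac12(g-1-\deg Z_b)$; speciality forces $\deg Z_b\ge(g-1)-(2\gamma-2)=g+1-2\gamma$, so $p_0\le\gamma$ — stronger than needed. The remaining case, where the moving part of $|U_b|$ is non‑special, is where the non‑compactness of $C$ enters. I would argue by contradiction: if $p_0>\gamma+1$, then (using that $U_b$ is constant while $L_b$ varies non‑trivially, the family being non‑isotrivial because $A^{1,0}_{\ol B}$ is ample) the line bundle $\omega_\Gamma\otimes L_b(-Z_b)$ is forced to be a fixed bundle $\mathcal M_0$ of degree $g-r-1$ and the base divisor $Z_b$, of degree $r$, must move in the varying complete system $|L_b\otimes(\mathcal M_0\otimes\omega_\Gamma^{-1})^{-1}|$; one then brings in the unipotent Picard–Lefschetz monodromy attached to a fibre of $\bar f$ with non‑compact Jacobian (such a fibre exists since $C$ is non‑compact). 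Its vanishing cycle is anti‑invariant — two branch points of $\wt R$ collide over the locally trivial base — hence lies in the local subsystem of the Prym corresponding to $A^{1,0}_{\ol B}$, and since the flat subsystem carrying $U_b$ must be pointwise fixed by this monodromy it is orthogonal to the vanishing cycle; tracing this through the now rigid configuration of $\Gamma$, $\mathcal M_0$ and the moving $Z_b$ produces the contradiction. Alternatively one can run the argument through the rigidity of the non‑isotrivial family $\{\ol F_b\}$ of curves mapping, with image generating, into the fixed abelian variety $\Jac(\Gamma)\times P_0$ of dimension $g-r$.

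The essential obstacle is precisely this last case: excluding a constant subspace of the varying space $H^0(\Gamma,\omega_\Gamma\otimes L_b)$ whose linear system has a large, non‑special moving part — equivalently, bounding the fixed part of the relative Prym by $\gamma+1$, the Prym counterpart of the relative‑irregularity inequality. The delicate point is to connect the hypothesis $\delta_0>0$ with enough speciality or rigidity to force $p_0\le\gamma+1$; the rest of the proof is the routine reformulation above together with Clifford's inequality.
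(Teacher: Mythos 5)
Your reformulation is sound: writing $p_0=g-r-\gamma$ for the rank of the flat part of the anti-invariant (Prym) piece of $\bar f_*\omega_{\ol S/\ol B}$, the claim is exactly $p_0\le\gamma+1$, and your Clifford argument correctly disposes of the case where the moving part of the constant linear subsystem $U_b\subseteq H^0(\Gamma,\omega_\Gamma\otimes L_b)$ is special (speciality forces $\deg Z_b\ge\deg L_b=g+1-2\gamma$, hence $p_0\le\gamma$). The problem is that the entire content of the claim sits in the remaining case, and there your proposal stops: the passage from ``the vanishing cycle is anti-invariant and orthogonal to the flat subsystem'' through ``the now rigid configuration of $\Gamma$, $\mathcal M_0$ and the moving $Z_b$'' to a contradiction is a placeholder, not an argument, as you yourself acknowledge in your closing paragraph. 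As written, the proof is incomplete precisely at its essential step.

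For comparison, the paper closes this step by a different mechanism. First, Fujita's theorem converts triviality of $F^{1,0}_{\ol B}$ into $q_{\bar f}=g-r$, so the contrary assumption $\gamma<\frac{g-r-1}{2}$ gives $q(\wt S)-q(\wt Y)\ge q_{\bar f}-\gamma>\gamma+1$. Then \cite[Lemma 4.8]{lz-15b} (a structural result on the relative Albanese map $J_0:\wt S\to{\rm Alb}_0(\wt S)$ attached to the double cover $\wt\Phi$) shows that $J_0(\wt S)$ is a \emph{curve} of genus at least $q_{\bar f}-\gamma$ — this is the ``fixed part of the relative Prym'' bound you were seeking, and it is a quoted input, not something recoverable from Clifford alone. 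The non-compactness of $C$ enters not via Picard--Lefschetz orthogonality but via \cite[Corollary 1.7]{ltyz-14}: every fiber over $\Delta_{nc}$ has geometric genus exactly $q_{\bar f}$, so restricting $J_0$ to such a fiber and comparing genera forces $J_0|_{F_0}$ to have degree one; this trivializes $\wt S$ birationally as $\ol B\times J_0(\wt S)$, contradicting non-isotriviality (ampleness of $A^{1,0}_{\ol B}$). Your parenthetical ``alternatively\dots rigidity of the family mapping into $\Jac(\Gamma)\times P_0$'' gestures at this route, but without the two cited inputs (or substitutes for them) the argument does not close.
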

	\begin{proof}[Proof of \autoref{claim-1-2}]
		Since $F_{\ol C}^{1,0}$ is trivial, $F_{\ol B}^{1,0}$ is also trivial.
		By \cite{fujita-78}, this is equivalent to saying that the relative irregularity $q_{\bar f}=g-r$.
		
		Assume on the contrary that $\gamma< \frac{g-r-1}{2}$.
		Then $q(\wt S)-q(\wt Y)\geq q_{\bar f}-\gamma>\gamma+1$.
		Thus by \cite[Lemma\,4.8]{lz-15b}, the image $J_0(\wt S)\subseteq {\rm Alb}_0(\wt S)$
		is a curve of genus at least $q_{\bar f}-\gamma$,
		where $J_0:\,\wt S \to {\rm Alb}_0(\wt S)$ is the relative Albanese map with respect to the double cover $\wt\Phi$
		as defined in \cite[\S\,4.2]{lz-15b}.
		On the other hand, one knows that any fiber of $\bar f$ over $\Delta_{nc}$ is of geometric genus equal to $q_{\bar f}$
		\cite[Corollary\,1.7]{ltyz-14}.
		Therefore one sees that the restricted map
		$J_0\big|_{F_0}:\,F_0 \to J_0(\wt S)\subseteq{\rm Alb}_0(\wt S)$ is of degree one.
		This implies that $\wt S$ is birational to $\ol B \times J_0(\wt S)$, which is a contradiction.
	\end{proof}
	By \eqref{eqn-1-10} together with \cite[Theorem\,4.1]{lz-17a} and \eqref{eqn-1-12}, one gets
	$$\deg\bar f _*\omega_{\ol S/\ol B}
	<\frac{6(g-1)}{~\frac{8(g-1)}{g-\gamma}+\frac{6g+4\gamma-10}{g-1}~} \deg\Omega^1_{\ol B}(\log\Delta_{nc})
	<\frac{r}{2} \deg\Omega^1_{\ol B}(\log\Delta_{nc}).$$
	Since $\Lambda=\emptyset$ by \autoref{claim-1-1},
	this is again a contradiction to \cite[Corollary\,3.6]{lz-17a}.
\end{proof}

\begin{remark}
	When the unitary part satisfies $\rank F_{\ol C}^{1,0}\leq \lceil(g+1)/2\rceil$,
	we refer to \cite{gst-17} for some results on the restriction of a possible
	Shimura curve generically in the Torelli locus.
\end{remark}

\section{Partial corestriction and associated symplectic representations} In this section we discuss a variant of \cite{mumford} constructing symplectic representations using corestriction of central simple algebras.
	\subsection{Partial corestriction}
For a finite separable extension of fields $F\supset L$ and $A$ a central simple $F$-algebra, we have the notion of restriction and corestriction: \begin{itemize}
	\item the restriction of scalars for $A$ along $L\mono F$ is the semi-simple $L$-algebra $\Res_{F/L}A$, which splits into $A_{\Lbar}^{\Emb_L(F)}$ after the base change $L\mono \Lbar$;

	\item the corestriction for $A$ along $L\mono F$ is a central simple $L$-algebra $D$, uniquely characterized by  $$D_\Lbar\isom\bigotimes_{\sigma\in\Emb_L(F)}\sigma^*A$$ up to isomorphism.
\end{itemize}Here $\Lbar$ is a fixed separable closure of $L$ and $\Emb_L(F)$ is the set of $L$-embeddings of $F$ into $\Lbar$, with $\sigma^*A=A\otimes_{F,\sigma}\Lbar$.

For the restriction we have an evident diagonal homomorphism $A\ra \Res_{F/L}A\otimes_LF$ of $F$-algebras by the adjunction between restriction and tensor product, and for the corestriction we still have a multiplicative map $A\ra \Cores_{F/L}A=D$, which is the multiplicative diagonal map $a\mapsto\otimes_{\sigma\in\Emb_L(F)}\sigma^*(a)$ viewed in $A_\Lbar\ra D_\Lbar$. Both lead to homomorphisms of linear $L$-groups: the restriction gives $\Gbb_\mrm^{A/F}\ra\Gbb_\mrm^{A/L}\otimes_LF$, and the corestriction gives $\Gbb_\mrm^{A/L}\ra\Gbb_\mrm^{D/L}$. 

We would like to consider the following construction as an interpolation between restriction and corestriction: for $F/L$ a finite separable extension of fields of degree $r$ and $t\in\{1,\cdots,r\}$, together with $A$ a central simple $F$-algebra, we define the $t$-th partial corestriction of $A$ along $L\mono F$ to be the  semi-simple $L$-algebra $D(t)$ with $$D(t)\otimes_L\Lbar=\bigoplus_{T}\bigotimes_{\sigma\in T}\sigma^*A$$ where the summation $\bigoplus_T$ is taken over all subsets $T$ in $\Emb_L(F)$ of cardinality $t$, and $\sigma^*A=A\otimes_{F,\sigma}\Lbar$. It is clear that $D(t)$ is unique up to $L$-isomorphism, with $D(1)\isom\Res_{F/L}A$ and $D(t)\isom\Cores_{F/L}A$ as the extremal examples.

For $t$ fixed as above and each $T\subset\Emb_L(F)$ of cardinality $t$, we have a multiplicative map $$A \ra\bigotimes_{\sigma\in T}\sigma^*A, \ a\mapsto\otimes\sigma^*(a)$$ They sum up to a multiplicative map $$(A\otimes_L\Lbar)^\times\ra\prod_{T}(\bigotimes_{\sigma\in T}\sigma^*A)^\times$$ and it sheafifies into a homomorphism of linear $L$-group $\Gbb_\mrm^{A/L}\ra\Gbb_\mrm^{D(t)/L}$.

We may also use a single $\Gal(\Lbar/L)$-orbit in $\Emb_L(F)$ instead of summing over all subsets of given cardinality. For example, let $\Lambda$ be a $\Gal(\Lbar/L)$-orbit  in $\Emb_L(F)$, in the sense that $\Lambda=\{g(T_0):g\in\Gal(\Lbar/L)\}$ for some $T_0\subset\Emb_L(F)$ non-empty, we define $D(\Lambda)$ to be the semi-simple $L$-algebra characterized as: $$D(\Lambda)\otimes_L\Lbar=\bigoplus_{T\in\Lambda}\bigotimes_{\sigma\in T}\sigma^*A$$ which is unique up to $L$-isomorphism. The homomorphism of linear $L$-group $\Gbb_\mrm^{A/L}\ra\Gbb_\mrm^{D(\Lambda)/L}$ is constructed in a parallel way.

\subsection{Construction of representations via Hermitian forms}

In this section we focus on the case of symplectic representations defining Shimura curves in $\Acal_g$ which are associated to partial corestrictions. We fix $\iota:L\mono F$ a finite separable extension of totally real number fields. Write $\Qac$ for the algebraic closure of $\Qbb$ in $\Cbb$, we may identify  $\Emb(F)=\Hom_\Qbb(F,\Qac)$  the set of embeddings of $F$ into $\Qac$ with $\Hom_\Qbb(F,\Cbb)$ and $\Hom_{\Qbb}(F,\Rbb)$, together with a transitive action of $\Gal(\Qac/\Qbb)$. Similarly, fix $L\mono\Cbb$, $\Lbar$ the separable closure of $L$ in $\Cbb$, we may identity $\Emb_L(F)$ the set of $L$-embeddings of $F$ into $\Lbar$ with $\Hom_L(F,\Cbb)$ and $\Hom_L(F,\Rbb)$, and the evident transitive action of $\Gal(\Lbar/L)$ on $\Emb_L(F)$ passes on. We are given an $F$-form $\Hbf$ of $\SL_{2,F}$, and we write $\{\sigma_1,\cdots,\sigma_s\}$ for the real embeddings of $L$, $\{\tau_{i,1},\cdots,\tau_{i,r}\}$ for the real embeddings of $F$ extending $\sigma_i$, such that $\Hbf(\Rbb,\tau_{1,1})\isom\SL_2(\Rbb)$ and $\Hbf(\Rbb,\tau_{i,j})=\SU_2(\Rbb)$ for $(i,j)\neq(1,1)$. Also write $\Jbf=\Res_{F/L}\Hbf$ with $\Jbf(\Rbb,\sigma_i)=\prod_{j=1,\cdots,r}\Hbf(\Rbb,\tau_{i,j})$ for $i=1,\cdots,s$.

We proceed to the construction of symplectic representations associated to partial corestrictions of quaternion algebras defining Shimura curves.

Case (1)+(2-1):

Following the discussion in Subsection 2.1 Case (1) and (2-1) are treated together. We are given a CM field of the form $E=F\otimes_LK$ of real part $F$, with $K$ a CM field of real part $L$, and $h:V\times V \ra E$ an Hermitian form on $V=E^2$, of signature $(1,1)$ along $\tau_{1,1}$, definite along the other real embeddings of $F$, and $\Hbf=\SU_h$. For $T\subset\Emb_L(F)$ non-empty, we have an Hermitian form $h_T:V_T\times V_T\ra \Lbar\otimes_LK$ with respect to $\Lbar\mono\Lbar\otimes_LK$, where $V_T$ is the $\Lbar$-linear tensor product of $V\otimes_{F,\tau}\Lbar$ over $\tau\in T$, and $\Jbf(\Lbar)$ preserves $h_T$, with its action on $V_T$ through the projection $\Jbf(\Lbar)\isom\prod_{\tau\in\Emb_L(F)}\Hbf(\Lbar,\tau)\ra \prod_{\tau\in T}\Hbf(\Lbar,\tau)$. Taking orthogonal direct sum over  the $\Gal(\Lbar/L)$-orbit $\Lambda$ of $T$ in $\Emb_L(F)$ we obtain an Hermitian space $h_\Lambda:V_\Lambda\times V_\Lambda\ra \Lbar\otimes_LK$ with $V_\Lambda=\bigoplus_{T\in\Lambda}\bigotimes_{\tau\in T}\tau^*V$ on which $\Jbf(\Lbar)$ acts by automorphisms, and the $\Gal(\Lbar/L)$-invariance descends it into an Hermitian space with respect to $K/L$, which we still denote as $h_\Lambda:V_\Lambda\times V_\Lambda\ra K$. Again $\Jbf$ preserves $h_\Lambda$ and a further scalar restriction from $L$ to $\Qbb$ gives $\Res_{F/\Qbb}\Hbf=\Res_{L/\Qbb}\Jbf\mono\Res_{L/\Qbb}\SU_{h_\Lambda}$. In particular, $\Jbf$ preserves the imaginary part of $h_\Lambda$ which is a symplectic $L$-form, the $L/\Qbb$-trace of which is a symplectic $\Qbb$-form $\psi$ on $M$ the $\Qbb$-vector space underlying $V_\Lambda$ and is preserved by $\Res_{F/\Qbb}\Hbf$. 

Note that $M=\Res_{K/\Qbb}V_\Lambda$ is of $\Qbb$-dimension $2\cdot 2^{t}s\#\Lambda$, where $s=[L:\Qbb]$ and $t$ is the common cardinality of $T\in\Lambda$. Let $\Cbb^\times$ act on $V\otimes_{F,\tau_{i,j}}\Rbb$ through the similitude by the norm $\Cbb^\times\ra\Rbb^\times$  if $(i,j)\neq(1,1)$, and act on $V\otimes_{F,\tau_{1,1}}\Rbb$ preserving the Hermitian form up to  $\Cbb^\times\ra\Rbb^\times$, then we obtain a homomorphism $\Sbb\ra\Gbf$ where $\Gbf$ is the $\Qbb$-subgroup of $\GSp_M$ extending $\Res_{F/\Qbb}\Hbf$ by a central $\Qbb$-torus $\Gbb_\mrm$. This gives rise to an inclusion of Shimura datum $(\Gbf,X;X^+)\mono(\GSp_M,\Hcal_M;\Hcal_M^+)$ and a Shimura curve $C$ in $\Acal_M$. Note that the symplectic $\Rbb$-representation of $\Gbf^\der(\Rbb)$ on $M\otimes_\Qbb\Rbb$ admits a decomposition $M\otimes_\Qbb\Rbb\isom \bigoplus_{i=1,\cdots,s}V\otimes_{L,\sigma_i}\Rbb$, with $\Gbf^\der(\Rbb)$ acting on $V\otimes_{L,\sigma_i}\Rbb$ through $\Jbf(\Rbb,\sigma_i)$, which is non-compact for $i=1$ and compact for $i=2,\cdots,s$. Hence $\bigoplus_{i\neq 1}V_\Lambda\otimes_{L,\sigma_i}\Rbb$ only contribute to the unitary part in the canonical Higgs bundle on $C$. 

The remaining $\Rbb$-subrepresentation $V_\Lambda\otimes_{L,\sigma_1}\Rbb=\bigoplus_{T\in\Lambda}V_T\otimes_{L,\sigma_1}\Rbb$ is isomorphic to $\bigoplus_{T\in\Lambda}(\Cbb^2)^{\otimes T}$, with $\Gbf^\der(\Rbb)$ acting on $(\Cbb^2)^{\otimes T}$ via the projection through the product of those $\Hbf(\Rbb,\tau_{1,j})$ corresponding to $\tau\in T$. Upon the natural identification of $\{\tau_{1,1},\cdots,\tau_{1,r}\}$ with $\Emb_L(F)$, we see that the summand
$(\Cbb^2)^{\otimes T}$ contributes to the unitary part in the canonical Higgs bundle if and only if $\tau_{1,1}$ does not appear in $T$; when it appears $(\Cbb^2)^{\otimes T}$ is an Hermitian space of signature $(2^{t-1},2^{t-1})$ ($t=\#T$).

It is in general difficult to compute the unitary rank for an arbitrary $\Gal(\Lbar/L)$-orbit $\Lambda$ as above. We may still treat the simpler case using the representation $V(t)$: $t$ is a fixed integer in $[1,r]$ ($r=[F:L]$), and $V(t)$ is an Hermitian space for $K/L$ with $V(t)\otimes_L\Lbar=\bigoplus_{\#T=t}\bigotimes_{\sigma\in T}V\otimes_{L,\sigma}\Lbar$ using orthogonal direct sum of Hermitian spaces constructed as above. Again write $M$ for the $\Qbb$-vector space underlying $V(t)$, which is of $\Qbb$-dimension $2\cdot 2^{t}\binom{r}{t}[L:\Qbb]$, we see the contribution to the unitary part of the Higgs bundle associated to $M$ are from: \begin{itemize}
	\item  $V(t)\otimes_{L,\sigma_i}\Rbb$ with $i=2,\cdots,s$;
	
	\item those $(\Cbb^2)^{\otimes T}$ with $T\subset\Emb_L(F)$ of cardinality $t$ in which the embedding corresponding to $\tau_{1,1}$ does not appear; each of these tensor product is of $\Cbb$-dimension $2^t$ on which $\Gbf^\der(\Rbb)$ acts through a compact group, and there are $\binom{r-1}{t}$ such summands.
\end{itemize}
Those $(\Cbb^2)^{\otimes T}$ with $\tau_{1,1}$ apearing in $T\subset\Emb_L(F)$ of cardinality $t$ do not contribute to the unitary part: $\Gbf^\der(\Rbb)$ preserves an Hermitian form of signature $(2^{t-1},2^{t-1})$ on such an summand, and there are $\binom{r-1}{t-1}$ such summands.

To summarize, the unitary part in the Higgs bundle associated to $M=\Res_{K/\Qbb}V(t)$ in this case is of rank $\frac{\rank M}{s}(s-1+\frac{\binom{r-1}{t}}{\binom{r}{t}})=\frac{\rank M}{s}(s-\frac{t}{r})=\rank M(1-\frac{t}{d})$ with $d=rs=[F:\Qbb]$.

Case (2-2): 

In this case we have a CM field $E$ of totally real part $F$, a quaternion division $E$-algebra $A$ carrying an involution of second kind which extends the $F$-conjugation on $E$, and an Hermitian pairing $H:A\times A\ra A$. A further composition with the reduced trace gives an Hermitian form $h:A\times A\ra E$, which is preserved by $\Hbf$ the outer form of $\SL_{2,F}$ as we have seen in Section 2. Along the real embedding $\tau_{1,1}$ we have $\tau_{1,1}^*A=A\otimes_{F,\tau_{1,1}}\Rbb\isom\Cbb^4$, on which $\Hbf(\Rbb,\tau_{1,1})\isom\SU(1,1)$ has a faithful action preserving $\tau_{1,1}^*h=h\otimes_{F,\tau_{1,1}}\Rbb$: this forces $\tau_{1,1}^*A\isom(\Cbb^2)^{\oplus 2}$ as a direct sum of two copies of the standard representation of $\SU(1,1)$ on $\Cbb^2$, and $\tau_{1,1}^*h$ has to be an Hermitian form of signature $(2,2)$. The other real embeddings only lead to definite Hermitian spaces preserved by compact Lie groups $\Hbf(\Rbb,\tau_{i,j})$ ($(i,j)\neq(1,1)$).


Given a finite extension of  fields $K\mono E$ and $\Lambda$ a $\Gal(\Kbar/K)$-orbit of some non-empty subset $T_0$ in $\Emb_K(E)$ we have the partial corestriction $D(\Lambda)$. In order to have natural Hermitian spaces on suitable modules over $D(\Lambda)$, we assume for simplicity that $K$ is also a CM field, and the extension $K\mono E$ is extended from an extension of totally real fields $L\mono F$ with $L$ the real part of $K$ and $E\isom F\otimes_LK$, and we identify $\Emb_K(E)$ with $\Emb_L(F)$. Thus the $\Gal(\Kbar/K)$-orbit $\Lambda$ above can be identified as a $\Gal(\Lbar/L)$-orbit in $\Emb_L(F)$, which is again denoted as $\Lambda$.

The semi-simple $K$-algebra $D(\Lambda)$ is characterized by the isomorphism $$D(\Lambda)\otimes_K\Kbar\isom\bigoplus_{T\in\Lambda}\bigotimes{\tau\in T}\tau^*A$$ Write $V$ for the $E$-module underlying $A$, we also have the following $D(\Lambda)$-module $V(\Lambda)$ again characterized as $V(\Lambda)\otimes_K\Kbar=\bigoplus_{T\in\Lambda}\bigotimes_{\tau\in T}\tau^*V$ which is just the $K$-vector space underlying $D(\Lambda)$. The Hermitian structure $h:V\times V\ra E$ passes to an Hermitian form $h_\Lambda$ on $V(\Lambda)$ similar to Case (1)+(2-1) using orthogonal direct sums of Hermitian structures on the summands, and it is preserved by $\Jbf=\Res_{F/L}\Hbf$. Taking a further scalar restriction we obtain an action of $\Res_{F/\Qbb}\Hbf$ on $M=\Res_{K/\Qbb}V(\Lambda)$ preserving the symplectic $\Qbb$-structure induced from the imaginary part of $h_\Lambda$. Arguments parallel to the previously established case produce a Shimura datum $(\Gbf,X;X^+)\mono(\GSp_M,\Hcal_M;\Hcal_M^+)$ defining a Shimura curve $C$.

The computation of unitary rank in the canonical Higgs bundle associated to $\Gbf\mono\GSp_M$ on $C$ is similar: \begin{itemize}
	\item the embeddings $\sigma_2,\cdots,\sigma_d$ correspond to summands $V(\Lambda)\otimes_{E,\sigma_i}\Cbb$ on which $\Gbf^\der(\Rbb)$ acts through a compact quotient, which only contribute to the unitary Higgs bundle;
	
	\item inside $V(\Lambda)\otimes_{E,\sigma_1}\Cbb$, the summands $\bigotimes_{\tau_{1,j}\in T}\tau_{1,j}^*V$ contributes to the unitary part if and only if $\tau_{1,1}\in T$; here we identify $\{\tau_{1,j}:j=1,\cdots,[E:K]\}$ with $\Emb_K(E)$, similar to Case (1)+(2-1).
\end{itemize}

The case $D(t)$ remains computable: we fix $t$ an integer in $[1,r]$ ($r=[E:K]$), and we have the semi-simple $K$-algebra $D(t)=[\bigoplus_{T}\bigotimes_{\tau\in T}\tau^*A]^{\Gal(\Kbar/K)}$ with $T$ running through subsets of $\Emb_L(K)$ of cardinality $t$, and the $K$-vector space $V(t)$ underlying $D(t)$ carries an Hermitian form $h(t)$, obtained as orthogonal direct sums $\bigoplus h_\Lambda$ taken over $\Gal(\Kbar/K)$-orbits considered as above. Write $M$ for the $\Qbb$-vector space underlying $V(t)$, it carries a symplectic $\Qbb$-form preserved by $\Res_{F/\Qbb}\Hbf$, induced from the imaginary part of $h(t)$, and we obtain a Shimura subdatum $(\Gbf,X;X^+)\mono(\GSp_{M},\Hcal_M;\Hcal_M^+)$ defining a Shimura curve $C$. In this case the unitary part of the canonical Higgs bundle associated to $\Gbf\ra\GSp_M$ is computed similarly: its rank equals $\rank(M)(1-\frac{t}{d})$ with $d=rs=[F:\Qbb]$. The $\Qbb$-dimension of $M$ is clearly $2\cdot4^{t}s\binom{r}{t}$.

\subsection{End of the proof} 

So far the the rank $2g$ of $M$ is $2\cdot 2^{t}\binom{r}{t}s$ in Case (1)+(2-1) and $2\cdot4^{t}\binom{r}{t}s$ in Case (2-2), and the ample part $A_\Cbar^{1,0}$ is of rank $g\frac{t}{d}\leq\frac{g}{[L:\Qbb]}$. \autoref{thm-main-1} affirms the generic exclusion of such a Shimura curve from $\Tcal_g^\circ$ as soon as $g\frac{t}{d}>\frac{5g+22}{7}$. Note that we are only interested in the Coleman-Oort conjecture for $g\geq 7$, and \autoref{thm-main-1} would not be applicable for $L\neq\Qbb$. Hence we assume $L=\Qbb$ and \autoref{main corollary mumford} is clear for $\frac{t}{d}>\frac{5}{7}+\frac{22}{7g}$.

\begin{remark}
	The symplectic representation $V(t)$ is in general reducible: given $T\subset\Emb_L(F)$ we have at most $r=[F:L]$ Galois conjugates of $T$ inside $\Emb_L(F)$, while $V(t)$ is a direct sum over $\binom{r}{t}$ such subsets. We have restricted to this case only for the simplicity of computation; the case of a general $\Gal(\Lbar/L)$-orbit of a given subset in $\Emb_L(F)$ remains unclear for the moment.
\end{remark}


\end{document}